\renewcommand\th@plain{\slshape}
\xpatchcmd{\proof}{\itshape}{\slshape}{}{}
\newcommand{\norm}[1]{\left\lVert#1\right\rVert}
\newcommand{\ds}{\displaystyle}
\newcommand{\R}{\mathbb{R}}
\newcommand{\Z}{\mathbb{Z}}
\newcommand{\card}{\mathrm{card}}
\newcommand\ssm{\smallsetminus}
\newcommand{\SL}{\mathrm{SL}_n(\R)}
\theoremstyle{plain}
\newtheorem{thm}{Theorem}[section]
\newtheorem{cor}[thm]{Corollary}
\newtheorem{rmk}[thm]{Remark}
\newtheorem{lem}[thm]{Lemma}
\theoremstyle{definition}
\newtheorem{defn}[thm]{Definition}
\newtheorem{notn}[thm]{Notation}
\theoremstyle{remark}
\numberwithin{equation}{section}
\title[Some Results on Random Unimodular Lattices]{Some Results on Random Unimodular Lattices}
\author{Mishel Skenderi}    
\address{
\begin{itemize}
\item[] Department of Mathematics
\item[] Brandeis University 
\item[] Waltham, MA $02454$\textendash$9110$
\item[] USA
\item[] \href{mailto:mskenderi@brandeis.edu}{{\tt mskenderi@brandeis.edu}} 
\end{itemize}}
\begin{document}

\begin{abstract}
Let $n \in \mathbb{Z}_{\geq 3}.$ Given any Borel subset $A$ of $\mathbb{R}^n$ with finite and nonzero measure, we prove that the probability that the set of primitive points of a random full-rank unimodular lattice in $\mathbb{R}^n$ does not contain any $\mathbb{R}$-linearly independent subset of $A$ of cardinality $(n-2)$ is bounded from above by a constant multiple, which depends only on $n$, of $\left(\mathrm{vol}(A)\right)^{-1}.$ This generalizes a result that is jointly due to J. S. Athreya and G. A. Margulis (see \cite[Theorem 2.2]{Log}). We also generalize independent results of C. A. Rogers (see \cite[Theorem 6]{MeanRog}) and W. M. Schmidt (see \cite[Theorem 1]{Metrical}) about primitive lattice points of random lattices to the case of primitive tuples of rank less than $\frac{n}{2}.$ In addition to the work of the authors who were just mentioned, a crucial element of this present paper is the usage of a rearrangement inequality due to Brascamp\textendash Lieb\textendash Luttinger (see \cite[Theorem 3.4]{BLL}). 
\end{abstract}  

\subjclass[2020]{11H06, 11H60, 60B05} 
\keywords{geometry of numbers, random lattices, counting lattice points, random Minkowski, Minkowski convex body theorem, successive minima, Siegel, Siegel transform, mean value theorem, Rogers}   

\maketitle

\tableofcontents

\section{Introduction and Summary of Results}
In this paper, we let $n$ denote an arbitrary element of $\Z_{\geq 2}.$\footnote{We shall eventually make the more restrictive standing assumption that $n$ is an arbitrary element of $\Z_{\geq 3}.$ For the moment, however, the value $n=2$ is admissible.} We write $m$ for the Lebesgue measure on $\R^n.$ We let $X_n$ denote the space of all full-rank unimodular lattices in $\R^n,$ and we let $\eta$ denote the Haar probability measure on $X_n$. That is to say, we identify $X_n$ with $\SL/\mathrm{SL}_n(\Z)$ in the usual manner; we then let $\eta$ denote the Borel probability measure on $X_n$ that is induced in the canonical fashion by the Haar measure on $\SL.$ For the sake of clarity, we explicitly remark that the domain of $\eta$ is assumed to be the set of all Borel subsets of $X_n.$ For any $\Lambda \in X_n,$ we define \[ \Lambda_{\rm pr} := \left\lbrace v \in \Lambda : \ \text{there exists a} \ \text{$\Z$-basis} \  \mathcal{B} \ \text{of} \ \Lambda \ \text{for which} \ v \in \mathcal{B} \right\rbrace,\] the set of all \textsl{primitive} points of $\Lambda.$ It is well-known that for any $\Lambda \in X_n,$ the mapping \[ \Z_{\geq 1} \times \Lambda_{\rm pr} \to \Lambda \ssm \left\lbrace 0_{\R^n} \right\rbrace \ \text{given by} \ (k, v) \mapsto kv \] is a bijection. Lastly, we let $\zeta$ denote the Euler-Riemann zeta function.

\medskip

In a 1955 paper, C. A. Rogers proved the $n \geq 3$ case of the following theorem: see \cite[Theorem 6]{MeanRog}.\footnote{Rogers initially claimed to have proved the case $n=2$, as well; however, it was later discovered that there was an error in his proof in the case $n=2.$ The $n=2$ case is therefore due to Schmidt.} In a 1960 paper, W. M. Schmidt established certain quantitative results (see \cite[Theorems 1 and 2]{Metrical}) that yield the following theorem in its entirety: see \cite[Corollary]{Metrical}. 

\begin{thm}\label{Rogers-Schmidt} 
Let $A$ be any Borel subset of $\R^n$ with infinite measure. Then for almost every full-rank lattice $\mathcal{L}$ in $\R^n,$ the set $A \cap \mathcal{L}$ contains infinitely many primitive points of $\mathcal{L}.$ 
\end{thm}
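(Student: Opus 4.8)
The plan is to reduce the assertion to a statement about the space $X_n$ of unimodular lattices equipped with $\eta$, and then, for $n \geq 3$, to combine the first two moments of a Siegel transform with the Borel--Cantelli lemma. For the reduction, note that the space of all full-rank lattices in $\R^n$ is $\mathrm{GL}_n(\R)/\mathrm{GL}_n(\Z)$, which decomposes as $\R_{>0} \times X_n$ via $\mathcal{L} \mapsto \bigl(\mathrm{covol}(\mathcal{L}),\, \mathrm{covol}(\mathcal{L})^{-1/n}\mathcal{L}\bigr)$, and that "almost every $\mathcal{L}$" refers to the corresponding product measure (whose first factor lies in the same measure class as Lebesgue measure on $\R_{>0}$). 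For $t > 0$ and $\Lambda \in X_n$, scaling by $t^{-1/n}$ is a bijection from $A \cap \bigl(t^{1/n}\Lambda\bigr)_{\rm pr}$ onto $\bigl(t^{-1/n}A\bigr) \cap \Lambda_{\rm pr}$, and $m\bigl(t^{-1/n}A\bigr) = t^{-1}m(A) = \infty$; hence, by Fubini, it suffices to prove that for \emph{every} Borel set $B \subseteq \R^n$ with $m(B) = \infty$ one has $\eta\bigl(\{\Lambda \in X_n : B \cap \Lambda_{\rm pr}\ \text{is infinite}\}\bigr) = 1$.

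Assume now that $n \geq 3$. For a Borel set $C$ of finite positive measure write $N_C(\Lambda) := \card(C \cap \Lambda_{\rm pr})$, the primitive Siegel transform of $\mathbbm{1}_C$. The Siegel mean value theorem gives $\int_{X_n} N_C\, d\eta = \zeta(n)^{-1}m(C)$, while Rogers' formula for the second moment of the primitive Siegel transform \cite{MeanRog} --- valid precisely because $n \geq 3$ --- yields
\[ \int_{X_n} N_C^{\,2}\, d\eta \ \leq\ \zeta(n)^{-2} m(C)^2 + c_n\, m(C) \]
for a constant $c_n$ depending only on $n$, the error term recording the contribution of the "diagonal" pairs $v, \pm v$ of primitive vectors. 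Consequently $\mathrm{Var}(N_C) \leq c_n\, m(C)$, and a Chebyshev inequality gives
\[ \eta\bigl(\{\Lambda : C \cap \Lambda_{\rm pr} = \varnothing\}\bigr) \ \leq\ \frac{\mathrm{Var}(N_C)}{\bigl(\zeta(n)^{-1}m(C)\bigr)^2} \ \leq\ \frac{C_n}{m(C)}, \]
with $C_n$ depending only on $n$.

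To finish the case $n \geq 3$, let $B$ be Borel with $m(B) = \infty$. Since $m\bigl(B \cap \{\absval{x} \geq R\}\bigr) = \infty$ for every $R > 0$, I can choose radii $0 < R_1 < R_2 < \cdots \to \infty$ such that the sets $B_j := B \cap \{R_j \leq \absval{x} < R_{j+1}\}$ satisfy $m(B_j) \geq j^2$. These $B_j$ are pairwise disjoint, bounded, and of finite positive measure, so the previous estimate gives $\sum_{j} \eta\bigl(\{B_j \cap \Lambda_{\rm pr} = \varnothing\}\bigr) \leq \sum_j C_n\, j^{-2} < \infty$. By the first Borel--Cantelli lemma, for $\eta$-almost every $\Lambda$ one has $B_j \cap \Lambda_{\rm pr} \neq \varnothing$ for all sufficiently large $j$; choosing one primitive point of $\Lambda$ from each nonempty $B_j$ and using the disjointness of the $B_j$ exhibits infinitely many distinct primitive points of $\Lambda$ inside $\bigcup_j B_j \subseteq B$. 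This proves the reduced statement, hence the theorem, for $n \geq 3$.

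The real obstacle is the case $n = 2$, which is exactly why the theorem there is due to Schmidt rather than Rogers. For $n = 2$ the second-moment identity fails: the "off-diagonal" contribution, governed by pairs of nearly parallel primitive vectors, is no longer dominated by $m(C)$, and $\int_{X_2} N_C^{\,2}\, d\eta$ can be infinite even for sets $C$ of large finite measure, so the Chebyshev step collapses. For that case I would instead appeal to Schmidt's quantitative metrical estimates \cite[Theorems 1 and 2]{Metrical}, whose proof is specific to dimension two and does not proceed through a second moment. Finally, I emphasize that the Brascamp--Lieb--Luttinger rearrangement inequality \cite[Theorem 3.4]{BLL} is not needed for this theorem; it enters only in the higher-rank generalizations, where the analogue of Rogers' moment identity must be bounded from above rather than evaluated.
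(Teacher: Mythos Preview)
The paper does not give its own proof of this theorem: it is stated as background and attributed to Rogers \cite[Theorem~6]{MeanRog} for $n\ge 3$ and to Schmidt \cite{Metrical} for $n=2$. Your argument is correct, and for $n\ge 3$ it is exactly in the spirit of the methods the paper uses for its own results: the variance bound $\mathrm{Var}(N_C)\le c_n\,m(C)$ follows by splitting pairs of primitive vectors according to rank and applying Theorems~\ref{SiegelMean} and~\ref{indepintegral} (and, as you correctly observe, the Brascamp--Lieb--Luttinger inequality is not needed at this level, since for dependent primitive pairs one has $w=\pm v$); the Chebyshev step is essentially the content of the paper's proof of Theorem~\ref{DY}; and the Borel--Cantelli finish with disjoint annular slabs $B_j$ of measure at least $j^2$ parallels the arguments in the proofs of Corollaries~\ref{naught} and~\ref{CorSchmidt}. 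Your treatment of $n=2$---recognizing that the primitive second moment is uncontrolled there and deferring to Schmidt's specific estimates in \cite{Metrical}---matches the paper's own attribution. The reduction in your first paragraph from full-rank lattices to unimodular ones via the covolume fibration is sound and is the natural way to pass to the setting of $X_n$ in which the paper works.
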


In a 2006 paper, I. Aliev and P. M. Gruber generalized Theorem \ref{Rogers-Schmidt} as follows. 

\begin{thm}\label{AlievGruber}\cite[Theorem 1]{AG}
Let $A$ be any Borel subset of $\R^n$ with infinite measure. Then for almost every full-rank lattice $\mathcal{L}$ in $\R^n,$ the set $A^n \cap \mathcal{L}^n$ contains infinitely many pairwise disjoint $n$-tuples of linearly independent primitive points of $\mathcal{L}.$
\end{thm}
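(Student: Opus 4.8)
The plan is to deduce the theorem from Theorem~\ref{Rogers-Schmidt} together with a reduction to a ``non‑degeneracy'' statement and an averaging argument that genuinely exploits that $A$ has \emph{infinite}, not merely large, measure.

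First I would make two routine reductions. By homogeneity one may assume the lattices are unimodular: a full‑rank lattice of covolume $t$ is $t^{1/n}\mathcal{L}$ with $\mathcal{L}\in X_n$, the natural measure on the space of all full‑rank lattices disintegrates over the covolume, and for fixed $t$ the conclusion for $t^{1/n}\mathcal{L}$ and $A$ is equivalent to the conclusion for $\mathcal{L}$ and $t^{-1/n}A$, which again has infinite measure. Next, write $A=\bigsqcup_{j\ge 1}A_j$ as a disjoint union of Borel sets, each of infinite measure. Then it suffices to prove the single‑set assertion: \emph{if $m(A)=\infty$ then for $\eta$‑almost every $\mathcal{L}\in X_n$ the set $\mathcal{L}_{\mathrm{pr}}\cap A$ contains $n$ linearly independent points.} Indeed, applying this to every $A_j$ (a countable intersection of conull sets) yields, for a.e.\ $\mathcal{L}$, an $n$‑tuple $T_j$ of linearly independent primitive points of $\mathcal{L}$ inside each $A_j$; since the $A_j$ are pairwise disjoint, so are the $T_j$, giving infinitely many pairwise disjoint such tuples.

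For the single‑set assertion, Theorem~\ref{Rogers-Schmidt} already gives that $\mathcal{L}_{\mathrm{pr}}\cap A$ is infinite for a.e.\ $\mathcal{L}$, so it remains to show that for a.e.\ $\mathcal{L}$ this set is not contained in any proper linear subspace, i.e.\ spans $\R^n$. Any proper subspace containing a subset of $\mathcal{L}$ is $\mathcal{L}$‑rational, hence under $\mathcal{L}=g\Z^n$ corresponds to one of the countably many primitive sublattices of $\Z^n$; by a countable intersection it is enough to fix a rational hyperplane direction, say $H_0=\R^{n-1}\times\{0\}$, and show that for a.e.\ $g\in\SL$ some primitive $w\in\Z^n$ with $w\notin H_0$ has $gw\in A$. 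Writing $g=(g'\mid g_n)$ with $g'$ the first $n-1$ columns, the lattice points off $\mathrm{span}(g')$ split into ``layers'' indexed by the nonzero last coordinate $k$, layer $k$ being the affine net $kg_n+g'\Z^{n-1}$ inside a hyperplane parallel to $\mathrm{span}(g')$ at Euclidean distance $|k|/\mathrm{covol}(g'\Z^{n-1})$. Since Haar measure on $\SL$ disintegrates as (measure in $g'$) times (Lebesgue measure in $g_n$ along the affine hyperplane $\{\det(g'\mid\cdot)=1\}$), I would fix $g'$ and integrate over $g_n$: a Fubini/covering computation shows layer $k$ meets $A$ for a.e.\ $g_n$ as soon as the $(n-1)$‑dimensional measure of $A$ inside that layer's hyperplane is at least $\mathrm{covol}(g'\Z^{n-1})$, and the $(n-1)$‑dimensional measures of $A$ over the whole stack of parallel hyperplanes sum (up to a Riemann‑sum error) to $\mathrm{covol}(g'\Z^{n-1})$ times the $n$‑dimensional measure of $A$ on one side of $\mathrm{span}(g')$ — which is infinite for a.e.\ $g'$, since $A$ of infinite measure cannot be concentrated near every hyperplane direction at once. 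A suitable combination of layers (with a minor correction since $(w',k)$ is primitive only for a positive proportion of $w'$) then produces, for a.e.\ $g_n$, a primitive point of $\mathcal{L}$ off $\mathrm{span}(g')$ lying in $A$; the countable intersection over rational $H_0$ completes the argument.

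The hard part will be precisely this non‑degeneracy claim — ruling out that the primitive points of $\mathcal{L}$ landing in $A$ all pile up in a proper subspace. This genuinely can occur for exceptional $\mathcal{L}$ (e.g.\ $\Z^n$ against a thin slab about a coordinate hyperplane, where every lattice point in the slab lies in that hyperplane), so the work is to show such $\mathcal{L}$ form a null set, and this is exactly where one must use $m(A)=\infty$ rather than $m(A)$ large. The delicate case is that of ``flat'' sets $A$ whose $(n-1)$‑dimensional sections all have finite measure: then no single layer carries enough mass, one must combine infinitely many layers and control the overlaps — via a second‑moment or Borel–Cantelli argument, with Rogers' mean value formulas supplying the relevant expectations and covariances — and the bookkeeping needed to make this uniform over the countably many rational directions $H_0$ is the crux. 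This failure of naive second‑moment concentration in the full‑rank regime is also what makes full‑rank tuples harder than low‑rank tuples, and presumably why the present paper's own Rogers‑type generalizations are stated only for tuples of rank below $n/2$.
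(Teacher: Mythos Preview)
The paper does not prove this theorem at all: it is quoted as a result of Aliev and Gruber \cite[Theorem~1]{AG}, and the paper explicitly says that its own Corollary~\ref{naught} ``recover[s] a weaker version'' of it, namely the analogous statement for $(n-2)$-tuples rather than $n$-tuples. So there is no proof in the paper to compare against.

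As for your proposal itself: the reductions are fine. Passing to unimodular lattices is routine, and the trick of writing $A=\bigsqcup_j A_j$ with each $m(A_j)=\infty$ correctly reduces the ``infinitely many pairwise disjoint $n$-tuples'' statement to the single assertion that $\Lambda_{\rm pr}\cap A$ spans $\R^n$ for $\eta$-a.e.\ $\Lambda$. Your identification of the non-degeneracy claim as the heart of the matter is also correct.

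The gap is in your attack on that claim. Having fixed $g'$ and parametrized $g_n$ by its component $g_n^{\parallel}\in\mathrm{span}(g')$, the layer-$k$ lattice is $k g_n^{\parallel}+g'\Z^{n-1}$ inside the fixed hyperplane $H_k$. The events ``layer $k$ meets $A$'' for different $k$ are all functions of the \emph{same} single translation variable $g_n^{\parallel}$ (merely rescaled by $k$), so they are rigidly correlated; there is no independence or quasi-independence to feed into a Borel--Cantelli argument along this one-parameter direction. Your fallback suggestion of using Rogers' mean value formulas instead points to averaging over the full space $X_n$, but that runs into exactly the obstruction the paper is organized around: Rogers' formula controls the $k$th moment of $\widehat{\mathbbm 1_A}$ only for $k\le n-1$, and the second-moment/Chebyshev route to $k$-tuples needs $2k\le n-1$. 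This is precisely why the paper's own results stop at $(n-2)$-tuples (Theorem~\ref{Main}, via the $(n-1)$st moment) and at $\ell\le (n-1)/2$ for primitive tuples (Theorem~\ref{ThmInequality}), and why the full-rank case in Aliev--Gruber requires a genuinely different argument that your sketch has not supplied.
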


In a 2009 paper, J. S. Athreya and G. A. Margulis proved the following probabilistic analogue of the Minkowski Convex Body Theorem; their theorem may be regarded as a quantitative version of Theorem \ref{Rogers-Schmidt} above. 

\begin{thm}\label{origAM}\cite[Theorem 2.2]{Log} There exists a constant $C_n > 0$ such that if $A$ is any Borel subset of $\R^n$ with $0 < m(A) < +\infty,$ then 
\begin{equation}\label{AMeq}
\eta\left( \{\Lambda \in X_n : A \cap \Lambda = \varnothing \} \right) \leq \frac{C_n}{m(A)}. 
\end{equation}    
\end{thm}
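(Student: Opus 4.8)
The plan is to replace the event ``$A\cap\Lambda=\varnothing$'' by the vanishing of the Siegel transform of $\mathbbm{1}_A$ and then carry out a second moment argument. First, we may assume that $0_{\R^n}\notin A$: otherwise $0_{\R^n}\in A\cap\Lambda$ for every $\Lambda\in X_n$, so that the event on the left-hand side of \eqref{AMeq} is empty and there is nothing to prove. Granting $0_{\R^n}\notin A$, one has $A\cap\Lambda=A\cap\big(\Lambda\ssm\{0_{\R^n}\}\big)$ for every $\Lambda\in X_n$, hence $A\cap\Lambda=\varnothing$ if and only if
\[ \widehat{\mathbbm{1}_A}(\Lambda)\ :=\ \sum_{v\in\Lambda\ssm\{0_{\R^n}\}}\mathbbm{1}_A(v)\ =\ 0 ; \]
so it suffices to find a constant $C_n>0$, depending only on $n$, with $\eta\big(\{\Lambda\in X_n:\widehat{\mathbbm{1}_A}(\Lambda)=0\}\big)\le C_n/m(A)$.

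Next I would bring in the two classical averaging identities over $X_n$. Since $\mathbbm{1}_A\in L^1(\R^n)$, Siegel's mean value theorem gives $\int_{X_n}\widehat{\mathbbm{1}_A}\,d\eta=m(A)$. For the second moment I would invoke Rogers' formula, in the form that holds when $n\ge3$: splitting an ordered pair $(v_1,v_2)$ of nonzero points of $\Lambda$ according to whether $v_1,v_2$ are $\R$-linearly independent or proportional, the contribution of the $\R$-linearly independent pairs averages to exactly $m(A)^2$, while the contribution of the proportional pairs — those of the form $v_1=au$, $v_2=bu$ with $u\in\Lambda_{\rm pr}$ and $a,b\in\Z\ssm\{0\}$ — is at most $\kappa_n\sum_{a,b\in\Z\ssm\{0\}}\int_{\R^n}\mathbbm{1}_A(ax)\,\mathbbm{1}_A(bx)\,dx$ for a constant $\kappa_n>0$ depending only on $n$. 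Now
\[ \sum_{a,b\in\Z\ssm\{0\}}\int_{\R^n}\mathbbm{1}_A(ax)\,\mathbbm{1}_A(bx)\,dx\ =\ \sum_{a,b\in\Z\ssm\{0\}} m\!\left(\tfrac1a A\cap\tfrac1b A\right)\ \le\ m(A)\sum_{a,b\in\Z\ssm\{0\}}\min\!\big(|a|^{-n},|b|^{-n}\big) , \]
and the final double sum is at most a constant times $\sum_{a=1}^{\infty}a^{\,1-n}$, which is finite because $n\ge3$. Therefore there is a constant $C_n>0$, depending only on $n$, with $\int_{X_n}\widehat{\mathbbm{1}_A}^{\,2}\,d\eta\le m(A)^2+C_n\,m(A)$; in particular $\widehat{\mathbbm{1}_A}\in L^2(X_n,\eta)$, so its variance is well defined and at most $C_n\,m(A)$.

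To conclude I would apply Chebyshev's inequality. Since $\widehat{\mathbbm{1}_A}(\Lambda)=0$ forces $\big|\widehat{\mathbbm{1}_A}(\Lambda)-m(A)\big|\ge m(A)$, and $\int_{X_n}\widehat{\mathbbm{1}_A}\,d\eta=m(A)$, we obtain
\[ \eta\big(\{\Lambda\in X_n:\widehat{\mathbbm{1}_A}(\Lambda)=0\}\big)\ \le\ \frac{1}{m(A)^2}\int_{X_n}\big(\widehat{\mathbbm{1}_A}-m(A)\big)^2\,d\eta\ =\ \frac{1}{m(A)^2}\Big(\int_{X_n}\widehat{\mathbbm{1}_A}^{\,2}\,d\eta-m(A)^2\Big)\ \le\ \frac{C_n}{m(A)} , \]
which is precisely \eqref{AMeq}.

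The step I expect to be the main obstacle is the second moment estimate. The point is that one needs Rogers' formula for $\int_{X_n}\widehat{\mathbbm{1}_A}^{\,2}\,d\eta$ in the \emph{sharp} form in which the contribution of the $\R$-linearly independent pairs is exactly $m(A)^2$ — not merely $O\!\left(m(A)^2\right)$ — since only then does this term cancel against the $-m(A)^2$ that Chebyshev produces; the remaining, ``proportional'', terms must then be dominated by the convergent series above. Both of these steps use $n\ge3$ essentially: for $n=2$ the $\R$-linearly independent part of the formula picks up a genuine correction term and the series $\sum_{a\ge1}a^{1-n}$ diverges, so this argument does not extend below the standing hypothesis. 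By contrast, the reduction to $0_{\R^n}\notin A$, Siegel's first moment identity, and the Chebyshev step are entirely routine.
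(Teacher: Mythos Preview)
The paper does not give its own proof of this theorem --- it is quoted from \cite{Log} as background --- but it does describe the Athreya--Margulis method as resting on the Siegel Mean Value Theorem together with Rogers's second moment formula \cite[Theorem 4]{MeanRog}. Your argument (first moment via Siegel, variance bound via Rogers with $k=2$ and the rank-one term controlled by $\sum_{a\ge 1} a^{1-n}<\infty$, then Chebyshev) is exactly that method and is correct for $n\ge 3$; the same second-moment bound also appears explicitly in the paper's proof of Theorem~\ref{DY}.
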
 

Throughout this paper, we make the usual convention that $\ds \mathrm{span}_{\R}\left(\varnothing\right) = \left\lbrace 0_{\R^n} \right\rbrace.$ Notice that the preceding theorem is then equivalent to the theorem obtained by replacing the inequality \eqref{AMeq} by the inequality 
\begin{equation}\label{modAMeq} 
    \eta\left( \{\Lambda \in X_n : \dim_\R\left( \mathrm{span}_{\R}\left( A \cap \Lambda \right)\right) < 1 \} \right) \leq \frac{C_n}{m(A)}. 
\end{equation}  

\medskip

\begin{rmk}\label{Standing} \rm 
Unless we explicitly state otherwise, we shall assume now and henceforth that $n$ is an arbitrary element of $\Z_{\geq 3}.$ 
\end{rmk}

\smallskip

Our first main result in this paper is the following theorem. It is a generalization of Theorem \ref{origAM}, in light of the fact that for any subset $A$ of $\R^n$ and any integers $j$ and $k$ with $1 \leq j \leq k,$ we have 
\begin{align*}
    \{\Lambda \in X_n : \ A \cap \Lambda = \varnothing \} &\subseteq  \{\Lambda \in X_n : \dim_\R\left( \mathrm{span}_{\R}\left( A \cap \Lambda \right)\right) < j \} \\
    &\subseteq  \{\Lambda \in X_n : \dim_\R\left( \mathrm{span}_{\R}\left( A \cap \Lambda_{\rm pr} \right)\right) < k \}.  
\end{align*}

\begin{thm}\label{Main}
There exists $\omega_n \in \R_{\geq 1}$ such that for any Borel subset $A$ of $\R^n$ with $0 < m(A) < +\infty,$ we have 
\begin{equation}\label{onemain}
    \eta\left( \left\lbrace\Lambda \in X_n : \dim_\R\left( \mathrm{span}_{\R}\left( A \cap \Lambda_{\rm pr} \right)\right) < n-2 \right\rbrace \right) \leq \frac{\omega_{n}}{m(A)}.
\end{equation} 
\end{thm}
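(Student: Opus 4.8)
The plan is to follow the strategy of Athreya--Margulis but to work with the Siegel transform of the $k$-fold product and to exploit the Brascamp--Lieb--Luttinger (BLL) rearrangement inequality to control its second moment. Set $k = n-2$. For a Borel set $A \subseteq \R^n$ of finite positive measure and a lattice $\Lambda \in X_n$, consider the quantity counting ordered $k$-tuples $(v_1, \dots, v_k)$ of $\R$-linearly independent primitive points of $\Lambda$, each lying in (a dilate of) $A$; call it $N_A(\Lambda)$. The key reduction is that $\dim_\R(\mathrm{span}_\R(A \cap \Lambda_{\rm pr})) \geq k$ is equivalent to $N_A(\Lambda) \geq 1$, so by the second-moment (Paley--Zygmund / Chebyshev) method,
\[
\eta\left( \{ \Lambda : N_A(\Lambda) = 0 \} \right) \;\leq\; \frac{\mathrm{Var}(N_A)}{\mathbb{E}[N_A]^2} \;=\; \frac{\mathbb{E}[N_A^2]}{\mathbb{E}[N_A]^2} - 1,
\]
so it suffices to show $\mathbb{E}[N_A^2] \leq (1 + O(1/m(A)))\,\mathbb{E}[N_A]^2$, with the implied constant depending only on $n$. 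Here I would normalize so that one works with a rescaled copy of $A$ of a fixed convenient volume, or equivalently keep $A$ as is and track the $m(A)$-dependence throughout.

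The first moment is handled by the Siegel (Rogers) mean value theorem for the space of primitive $k$-tuples: since $k = n - 2 < n$, the relevant Rogers formula gives $\mathbb{E}[N_A] = c_{n,k}\, m(A)^k$ up to lower-order corrections coming from the constraint that the tuple be linearly independent and from the zeta-factor bookkeeping; the linear-dependence locus contributes a negligible amount because it is a finite union of lower-dimensional conditions and its Siegel integral is $O(m(A)^{k-1})$ or smaller. For the second moment, $\mathbb{E}[N_A^2]$ expands as a sum over pairs of $k$-tuples, i.e. a sum over configurations of up to $2k$ primitive points with prescribed linear-dependence pattern among them; the main (''generic'') term is the one where all $2k$ points are linearly independent, and since $2k = 2n - 4 < 2n$ one may again apply Rogers' higher-moment formula to evaluate it as $c_{n,k}^2\, m(A)^{2k}(1 + o(1))$, matching $\mathbb{E}[N_A]^2$. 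The off-diagonal terms — where some of the $2k$ points satisfy extra linear relations, in particular where one tuple shares a point with or lies partly in the span of the other — are the error terms; each such term involves an integral of the Siegel transform of a product of fewer than $2k$ ''free'' copies of $A$ together with constrained copies, and the number of free copies drops by at least one, which should produce the saving of one power of $m(A)$.

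The main obstacle, and the place where BLL enters, is bounding these off-diagonal / constrained integrals uniformly in $A$ by $C_n \, m(A)^{2k-1}$ (or better). The difficulty is that the Siegel transform is unbounded, so naive estimates on $\int_{X_n} \widehat{f}^2 \, d\eta$-type quantities for $f = \mathbbm{1}_A$ fail to be controlled purely by $m(A)$; Rogers' formulas express these moments as integrals over $\R^n$-configurations of products of translates/dilates of $\mathbbm{1}_A$, and BLL's rearrangement inequality lets one replace $A$ by a ball $A^*$ of the same measure while only increasing the relevant multilinear convolution-type integrals, after which the ball case can be computed or estimated directly and shown to have the right order in $m(A^*) = m(A)$. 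So the steps are: (1) set up $N_A$ and the second-moment inequality; (2) compute $\mathbb{E}[N_A]$ via Rogers, isolating the main term $c_{n,k} m(A)^k$; (3) expand $\mathbb{E}[N_A^2]$ into the generic term plus finitely many constrained terms indexed by linear-dependence patterns; (4) evaluate the generic term via Rogers to match $\mathbb{E}[N_A]^2$; (5) bound each constrained term by symmetrizing via BLL to the ball case and then estimating the ball case to get $O(m(A)^{2k-1})$; (6) combine to get $\eta(\{N_A = 0\}) \leq \omega_n / m(A)$. I expect step (5) — getting the BLL symmetrization to apply in the exact form needed for the constrained multilinear integrals coming out of Rogers' second-moment formula, and checking the ball-case order — to be the technical heart of the argument.
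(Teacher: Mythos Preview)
Your second-moment/Chebyshev plan has a genuine gap: for $n\geq 4$ the quantity $\mathbb{E}[N_A^2]$ is infinite, so the inequality $\eta(\{N_A=0\})\leq \mathrm{Var}(N_A)/\mathbb{E}[N_A]^2$ is vacuous. Concretely, $N_A\leq \bigl(\widehat{\mathbbm{1}_A}^{\,\rm pr}\bigr)^{n-2}$, so $N_A^2$ is controlled by the $(2n-4)$-th power of the primitive Siegel transform, and it is known (see Remark~\ref{Schfin}) that $\widehat{\mathbbm{1}_A}\in L^p(X_n)$ only for $p<n$; since $2n-4\geq n$ once $n\geq 4$, integrability fails. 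This is not an artifact of the crude majorization: for $n=4$, near the cusp where $\lambda_1(\Lambda),\lambda_2(\Lambda)$ are small, the number of primitive points of $\Lambda$ in a fixed ball is $\asymp(\lambda_1\lambda_2)^{-1}$, so $N_A\asymp(\lambda_1\lambda_2)^{-2}$ and $\int_{X_n} N_A^2\,d\eta$ diverges. Relatedly, your sentence ``since $2k=2n-4<2n$ one may again apply Rogers' higher-moment formula'' is incorrect: Rogers' formula (Theorem~\ref{kmoment}) requires the exponent to be at most $n-1$, and for $n\geq 5$ the ``generic'' configuration of $2k=2n-4$ linearly independent vectors in $\R^n$ does not even exist.

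The paper's proof avoids this by replacing Chebyshev with H\"older: with $f={}^{(n-2),\rm pr}\widetilde{\mathbbm{1}_A}$ and $g=\mathbbm{1}_{\{f\geq 1\}}$, one writes $\|f\|_1\leq \|f\|_p\,\|g\|_q$ for the specific exponents $p=\frac{n-1}{n-2}$, $q=n-1$. The point of this choice is that $f^p\leq \bigl(\widehat{\mathbbm{1}_A}^{\,\rm pr}\bigr)^{(n-2)p}=\bigl(\widehat{\mathbbm{1}_A}^{\,\rm pr}\bigr)^{n-1}$, so one lands exactly on the $(n-1)$-th moment --- the largest one Rogers' formula handles and the largest one that is finite. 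Then BLL (Lemma~\ref{Majorize}) is applied to that $(n-1)$-th moment to bound it by the polynomial $Q_{n,n-1}(m(A))$, whose leading coefficient matches $\|f\|_1^q$ and whose subleading terms give the $O(1/m(A))$ error. Your instincts about the role of BLL and about isolating a ``main term plus $O(m(A)^{-1})$'' are right; what needs to change is the exponent bookkeeping so that you never ask for more than $n-1$ moments of the Siegel transform.
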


\medskip

The following corollary is then a straightforward consequence of Theorem \ref{Main}; it is the analogue of \cite[Corollary 2.3]{Log}. 

\begin{cor}\label{InfMain}
Let $\ds \left( A_j \right)_{j \in \Z_{\geq 1}}$ be any sequence of Borel subsets of $\R^n$ with $\ds \lim_{j \to +\infty} m\left(A_j\right) = +\infty$ and such that for each $j \in \Z_{\geq 1},$ we have $0 < m\left(A_j\right) < +\infty.$ Then \[  \lim_{j \to +\infty} \eta\left(\left\lbrace \Lambda \in X_n : \dim_{\R}\left(\mathrm{span}_{\R}\left(\Lambda_{\rm pr} \cap A_j \right)\right) \geq n-2 \right\rbrace\right) = 1. \] Moreover, if $A$ is any Borel subset of $\R^n$ with $m(A) = +\infty,$ then \[ \eta\left(\left\lbrace \Lambda \in X_n : \dim_{\R}\left(\mathrm{span}_{\R}\left(\Lambda_{\rm pr} \cap A \right)\right) \geq n-2 \right\rbrace\right) = 1. \]  
\end{cor}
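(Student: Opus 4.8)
The plan is to deduce Corollary \ref{InfMain} directly from Theorem \ref{Main} by a routine measure-theoretic argument, exactly as in \cite[Corollary 2.3]{Log}. For the first assertion, I would write $E_j := \left\lbrace \Lambda \in X_n : \dim_{\R}\left(\mathrm{span}_{\R}\left(\Lambda_{\rm pr} \cap A_j \right)\right) < n-2 \right\rbrace$; then Theorem \ref{Main} gives $\eta(E_j) \leq \omega_n / m(A_j)$ for every $j \in \Z_{\geq 1}$, and since $m(A_j) \to +\infty$ we get $\eta(E_j) \to 0$, which is precisely the claim that the complementary probabilities tend to $1$. No passage to a subsequence or any compactness is needed here.

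For the second assertion, let $A$ be a Borel subset of $\R^n$ with $m(A) = +\infty$. I would exhaust $A$ from within by Borel sets of finite, positive, and arbitrarily large measure: for instance, set $A^{(R)} := A \cap B(0_{\R^n}, R)$ for $R \in \Z_{\geq 1}$, where $B(0_{\R^n}, R)$ denotes the open ball of radius $R$ centered at the origin. Then $m\left(A^{(R)}\right) < +\infty$ for each $R$, and by continuity of measure from below (since $A^{(R)} \uparrow A$) we have $m\left(A^{(R)}\right) \to m(A) = +\infty$ as $R \to +\infty$; in particular $m\left(A^{(R)}\right) > 0$ for all sufficiently large $R$. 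Since $A^{(R)} \subseteq A$, monotonicity of $\mathrm{span}_\R$ and of $\dim_\R$ yields the inclusion $\left\lbrace \Lambda \in X_n : \dim_{\R}\left(\mathrm{span}_{\R}\left(\Lambda_{\rm pr} \cap A^{(R)} \right)\right) \geq n-2 \right\rbrace \subseteq \left\lbrace \Lambda \in X_n : \dim_{\R}\left(\mathrm{span}_{\R}\left(\Lambda_{\rm pr} \cap A \right)\right) \geq n-2 \right\rbrace$ for every $R$. Applying Theorem \ref{Main} to $A^{(R)}$ for $R$ large, the left-hand event has $\eta$-measure at least $1 - \omega_n / m\left(A^{(R)}\right)$, and letting $R \to +\infty$ forces the right-hand event to have $\eta$-measure $1$.

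There is essentially no obstacle to overcome: the corollary is a soft consequence of the quantitative bound \eqref{onemain}, and the only points requiring a word of care are that the exhausting sets $A^{(R)}$ are indeed Borel (clear, as intersections of Borel sets), that they have finite measure (immediate, being subsets of bounded balls), that their measures grow without bound (continuity of measure from below), and that the relevant events are Borel subsets of $X_n$ and hence $\eta$-measurable (which is part of the content already implicit in the statement of Theorem \ref{Main}). I would state these verifications briefly and then conclude. The argument is uniform in $n \in \Z_{\geq 3}$ and uses nothing beyond Theorem \ref{Main} and standard properties of the measure $\eta$.
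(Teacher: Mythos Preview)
Your proposal is correct and follows essentially the same approach as the paper: the paper also derives the first assertion directly from the bound \eqref{onemain} and then proves the second by exhausting $A$ via intersections $A \cap B_j$ with balls of increasing size, using monotonicity to pass from the truncations to $A$. The only cosmetic differences are that the paper uses its notation $B_j$ (closed balls of measure $j$) rather than open balls of radius $R$, and phrases the second step as an application of the first part rather than re-invoking Theorem \ref{Main} directly; your explicit remark that $m\!\left(A^{(R)}\right) > 0$ only for sufficiently large $R$ is a point the paper leaves implicit.
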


\medskip 

We also recover a weaker version of the result that was essentially proved by Aliev and Gruber in \cite[Theorem 1]{AG}. For the sake of clarity, let us first introduce some notation.   

\begin{defn}\label{countN}
If $A$ is any subset of $\R^n$ and $\Lambda \in X_n,$ then define \[ N_{n, n-2}(\Lambda, A) := \card\left( \left\lbrace \left( v_1, \dots , v_{n-2} \right) \in \left(\Lambda_{\rm pr}\right)^{n-2} \cap A^{n-2}: \dim_{\R}\left(\mathrm{span}_{\R}\left(\left\lbrace v_1, \dots , v_{n-2} \right\rbrace\right)\right) = n-2 \right\rbrace \right). \] 
\end{defn}

\begin{cor}\label{naught}
Let $A$ be any Borel subset of $\R^n$ with $m(A) = +\infty.$ Then \[ \eta\left( \left\lbrace \Lambda \in X_n : N_{n, n-2}(\Lambda, A) = \aleph_0 \right\rbrace\right) = 1. \] 
\end{cor}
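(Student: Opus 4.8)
The plan is to derive this from the second assertion of Corollary~\ref{InfMain} together with a disjointification argument; the analytic content is entirely contained in Corollary~\ref{InfMain}, so what is left is measure-theoretic bookkeeping. The key observation is that $A$ can be written as a countable disjoint union of Borel sets each still of infinite measure.

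To produce such a decomposition, set $B_j := \{x \in \R^n : j-1 \leq \norm{x} < j\}$ for $j \in \Z_{\geq 1}$; these are pairwise disjoint Borel sets of finite measure covering $\R^n$. Writing $a_j := m(A \cap B_j) \in [0, +\infty)$, we have $\sum_{j \geq 1} a_j = m(A) = +\infty$, so since every term is finite I can choose $0 = n_0 < n_1 < n_2 < \cdots$ with $\sum_{j = n_{i-1}+1}^{n_i} a_j \geq 1$ for each $i \geq 1$; write $I_i := \{n_{i-1}+1, \dots, n_i\}$ for the $i$-th block. Fixing a bijection $\phi : \Z_{\geq 1} \times \Z_{\geq 1} \to \Z_{\geq 1}$ and setting $S_k := \bigcup_{\ell \geq 1} I_{\phi(k,\ell)}$ for each $k \geq 1$ yields a partition $\Z_{\geq 1} = \bigsqcup_{k \geq 1} S_k$ in which every $S_k$ contains infinitely many of the blocks $I_i$, so that $\sum_{j \in S_k} a_j = +\infty$. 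Then $A_k := \bigcup_{j \in S_k}(A \cap B_j)$ defines pairwise disjoint Borel subsets of $A$ with $m(A_k) = +\infty$ for every $k$.

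Now I would apply the second assertion of Corollary~\ref{InfMain} to each $A_k$: the set $E_k := \{\Lambda \in X_n : \dim_\R(\mathrm{span}_\R(\Lambda_{\rm pr} \cap A_k)) \geq n-2\}$ has $\eta(E_k) = 1$, and hence the countable intersection $E := \bigcap_{k \geq 1} E_k$ also satisfies $\eta(E) = 1$. It suffices to check that $N_{n,n-2}(\Lambda, A) = \aleph_0$ for every $\Lambda \in E$. Fix such a $\Lambda$. For each $k$, the condition $\dim_\R(\mathrm{span}_\R(\Lambda_{\rm pr} \cap A_k)) \geq n-2$ supplies an $\R$-linearly independent tuple $\bb^{(k)} \in (\Lambda_{\rm pr} \cap A_k)^{n-2}$, which (since $A_k \subseteq A$ and it has full span) is one of the tuples counted by $N_{n,n-2}(\Lambda, A)$. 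Because the $A_k$ are pairwise disjoint, the tuples $\bb^{(1)}, \bb^{(2)}, \dots$ are pairwise distinct — indeed they share no coordinate — so $N_{n,n-2}(\Lambda, A) \geq \aleph_0$; since $\Lambda_{\rm pr}$ is countably infinite (as $n \geq 3$) and hence $(\Lambda_{\rm pr})^{n-2}$ is countable, we also have $N_{n,n-2}(\Lambda, A) \leq \aleph_0$, whence equality, and the corollary follows.

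I do not expect any genuine obstacle here: the only points requiring care are that the decomposition of $A$ stays within the Borel $\sigma$-algebra while keeping each piece of infinite measure, and that linear-independence witnesses drawn from pairwise disjoint sets really do produce pairwise distinct tuples — both of which are handled above.
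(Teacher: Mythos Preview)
Your argument is correct, but it differs from the paper's route. The paper does not partition $A$ into infinitely many disjoint infinite-measure pieces; instead it works with the \emph{tails} $A \ssm A_j$, where $A_j := A \cap B_j$ (with $B_j$ the ball of measure $j$). Each tail still has infinite measure, so Corollary~\ref{InfMain} gives $\eta\left(\Omega_{n,n-2}(A \ssm A_j)\right) = 0$. The paper then argues contrapositively: if $N_{n,n-2}(\Lambda, A) < \aleph_0$, the finitely many tuples involve only finitely many vectors, all of which lie in some $B_j$; hence $\Lambda \in \Omega_{n,n-2}(A \ssm A_j)$, and the inclusion $\{\Lambda : N_{n,n-2}(\Lambda,A) < \aleph_0\} \subseteq \bigcup_j \Omega_{n,n-2}(A \ssm A_j)$ finishes the proof in one line.

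Both arguments rest entirely on Corollary~\ref{InfMain}; the difference is bookkeeping. The paper's version is shorter and avoids the (mild) work of building the disjoint infinite-measure partition. Your version has the virtue of explicitly exhibiting infinitely many pairwise coordinate-disjoint witnesses, so it actually proves a slightly stronger statement (the existence of infinitely many tuples that share no vector at all), though that extra strength is not needed here.
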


\medskip

In order to state our second main result, it is necessary to make a new definition and establish some notation. 
\begin{defn}
\begin{itemize}
\item[\rm (i)] Let $k$ be any integer with $1 \leq k \leq n.$ Let $\Lambda \in X_n.$ Given any $(v_1, \dots , v_k) \in \Lambda^k,$ we say that $(v_1, \dots , v_k)$ is a \textsl{primitive $k$-tuple} if there exists a $\Z$-basis $\{w_1, \dots , w_n\}$ of $\Lambda$ with $\{v_1, \dots , v_k\} \subseteq \{w_1, \dots , w_n\}.$ We denote the set of all primitive $k$-tuples of $\Lambda$ by $\left(\Lambda^k\right)_{\rm pr}.$ 
\item[\rm (ii)]
Let $E$ be any subset of $\R^n,$ and let $k$ be any integer with $1 \leq k \leq n.$ Define $\mathrm{Pr}_{E, k} : X_n \to [0, +\infty]$ by $\ds \mathrm{Pr}_{E, k}(\Lambda) := \card\left( E^k \cap \left(\Lambda^k\right)_{\rm pr}\right).$ If $E$ is a Borel set, then notice that $\mathrm{Pr}_{E, k}$ is $\eta$-measurable. 
\end{itemize}
\end{defn}      

\begin{thm}\label{ThmInequality}
There exists $\omega_n'' \in \R_{> 0}$ such that for any Borel subset $A$ of $\R^n$ with $m(A) < +\infty$ and for any integer $\ell$ with $\ds 1 \leq \ell \leq \frac{n-1}{2},$ we have
\begin{equation}\label{RogType}
\norm{\mathrm{Pr}_{A, \ell} - \left(\int_{X_n} \mathrm{Pr}_{A, \ell} \ d\eta\right) \mathbbm{1}_{X_n}}_2 = \norm{\mathrm{Pr}_{A, \ell} - \left( \frac{\left(m(A)\right)^\ell}{\prod_{j=0}^{\ell-1} \zeta\left(n-j\right)} \right) \mathbbm{1}_{X_n}}_2 \leq \omega_n'' \left(m(A)\right)^{\frac{2\ell-1}{2}}.  
\end{equation}
\end{thm}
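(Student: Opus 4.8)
The plan is to compute the second moment $\int_{X_n} \left(\mathrm{Pr}_{A, \ell}\right)^2 \, d\eta$ and compare it with the square of the first moment $\int_{X_n} \mathrm{Pr}_{A, \ell} \, d\eta$, since the left-hand side of \equ{RogType} is exactly $\left(\int_{X_n}\left(\mathrm{Pr}_{A,\ell}\right)^2 d\eta - \left(\int_{X_n}\mathrm{Pr}_{A,\ell} \, d\eta\right)^2\right)^{1/2}$. First I would recall (or establish) the Rogers-type mean value formula for the Siegel transform of a function on the space of primitive $\ell$-tuples: for a reasonable function $f$ on $(\R^n)^\ell$, the integral over $X_n$ of $\Lambda \mapsto \sum_{(v_1,\dots,v_\ell) \in (\Lambda^\ell)_{\mathrm{pr}}} f(v_1,\dots,v_\ell)$ equals $\left(\prod_{j=0}^{\ell-1}\zeta(n-j)\right)^{-1} \int_{(\R^n)^\ell} f$, which for $f = \mathbbm{1}_{A^\ell}$ gives the first-moment identity asserted in \equ{RogType}. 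This is the classical Rogers formula; the normalizing constant $\prod_{j=0}^{\ell-1}\zeta(n-j)$ is precisely the ratio of the covolume normalization, reflecting that the number of $\ell$-tuples of lattice vectors extendable to a basis, among all $\ell$-tuples spanning a primitive sublattice, is governed by these zeta values.

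For the second moment, $\left(\mathrm{Pr}_{A,\ell}\right)^2$ is a double sum over pairs of primitive $\ell$-tuples $((v_1,\dots,v_\ell),(w_1,\dots,w_\ell))$, equivalently a sum over $2\ell$-tuples of lattice points; I would split this sum according to the rank $r$ of $\mathrm{span}_\R\{v_1,\dots,v_\ell,w_1,\dots,w_\ell\}$, which ranges over $\ell \le r \le \min(2\ell, n)$. The hypothesis $\ell \le \frac{n-1}{2}$, i.e. $2\ell \le n-1 < n$, guarantees that even the top term $r = 2\ell$ is a nondegenerate case to which Rogers-type higher-moment formulas apply without the anomalous terms that appear when the configuration fills up the ambient dimension. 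The $r = 2\ell$ contribution, after applying the appropriate Rogers higher-moment formula, yields exactly $\left(\prod_{j=0}^{\ell-1}\zeta(n-j)\right)^{-2}(m(A))^{2\ell}$ — this is the square of the first moment — plus a lower-order remainder. Each term with $\ell \le r \le 2\ell - 1$ contributes a quantity bounded by a constant (depending only on $n$) times $(m(A))^r \le (m(A))^{2\ell-1}$, because after integrating out the lattice structure one is left with an integral of $\mathbbm{1}_{A^\ell}$-type functions over a space of configurations of rank $r$, which by Fubini / a change of variables produces a factor $(m(A))^r$ up to bounded combinatorial and volume constants; here one uses $m(A) \ge 1$ to absorb smaller powers, or handles $m(A) < 1$ separately by a trivial bound. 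Squaring and taking square roots then gives \equ{RogType} with an explicit $\omega_n''$.

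The main obstacle I anticipate is establishing the precise form of the Rogers higher-moment formula for primitive $\ell$-tuples with the correct combinatorial bookkeeping of the lower-rank strata, and in particular verifying that the leading $r=2\ell$ term reproduces exactly the square of the first moment with no correction. In Rogers' and Schmidt's original treatments the formulas for $\int_{X_n}\left(\widehat{f}\right)^2 d\eta$ involve a sum over sublattice-type indices with coefficients built from $\zeta$-values, and one must check that under the constraint $2\ell \le n - 1$ all of these coefficients are $O_n(1)$ and that the "diagonal-free" leading term factorizes as claimed. A clean way to organize this is to pass through the Brascamp–Lieb–Luttinger rearrangement inequality advertised in the abstract only if needed to bound the off-diagonal integrals uniformly; more likely, for the $L^2$ estimate it suffices to bound $\int_{(\R^n)^r} \mathbbm{1}_{A^\ell \times A^\ell}(\pi(\xx))\, d\xx$ crudely by $(m(A))^{2\ell}$-type quantities and then note that the rank-$r$ stratum with $r < 2\ell$ forces linear dependencies that reduce the effective number of free $A$-constraints, yielding the gain to $(m(A))^{2\ell-1}$. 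The bookkeeping of which subsets of the $2\ell$ vectors are "free" versus "determined" on each stratum, and checking the exponent arithmetic, is the part requiring genuine care rather than routine computation.
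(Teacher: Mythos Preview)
Your overall architecture matches the paper's: split $\left(\mathrm{Pr}_{A,\ell}\right)^2$ according to whether the $2\ell$ vectors span a subspace of full rank $2\ell$ or not, show the full-rank piece integrates to exactly $\left(\theta_{n,\ell}(m(A))^\ell\right)^2$, and bound the rank-deficient remainder by a polynomial of degree $2\ell-1$ in $m(A)$. Two points of divergence are worth noting.

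First, your remark that ``more likely \dots\ it suffices to bound $\int_{(\R^n)^r}\mathbbm{1}_{A^\ell\times A^\ell}(\pi(\xx))\,d\xx$ crudely'' is the one place where the plan would actually break. In Rogers's formula (Theorem~\ref{kmoment}) the rank-$r$ contribution is an \emph{infinite} sum over integers $s$ and over integer matrices $D\in\mathcal{D}_{2\ell,r,s,(\nu;\mu)}$; if you bound each individual integral by $(m(A))^r$ independently of $D$, the sum over $D$ diverges. The convergence of the full sum (to $\beta_{n,2\ell,r}(1)\cdot t^r$) is known for balls $B_t$ by Schmidt's work, but for an arbitrary Borel $A$ there is no direct ``Fubini / change of variables'' giving a uniform constant. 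The paper's device is precisely the Brascamp--Lieb--Luttinger inequality you mention and then set aside: it majorizes each multilinear integral for $\mathbbm{1}_A$ by the corresponding integral for $\rho_{m(A)}$, so that the entire rank-deficient part is bounded by $P_{n,2\ell}(m(A))-(m(A))^{2\ell}$ in one stroke (this is Lemma~\ref{Majorize}, applied with $k=2\ell$). So BLL is not optional here; it is exactly the missing idea that makes the ``bounded combinatorial and volume constants'' finite.

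Second, for the full-rank term the paper does not unwind Rogers-type combinatorics at all. Instead it argues, as in the proof of Theorem~\ref{indepintegral}, that the $\SL$-action on $Z_{2\ell}$ is transitive (this is where $2\ell\le n-1$ enters), so the functional $F\mapsto\int_{X_n}\sum_{(v,w)\in Z_{2\ell}\cap((\Lambda^\ell)_{\rm pr})^2}F(v,w)\,d\eta$ is a constant multiple of Lebesgue integration on $Z_{2\ell}$; the constant is then identified as $\theta_{n,\ell}^2$ by plugging in a product test function and invoking Theorem~\ref{Satz}. This sidesteps entirely the ``correct combinatorial bookkeeping'' you flag as the main obstacle, and gives the exact equality \eqref{equality} without any lower-order correction.
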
  

As consequences of the above theorem, we have the following corollaries. Corollary \ref{CorMinkowski} is a conditional generalization of an obvious (and unstated) corollary of Theorem \ref{Main}: it is a generalization because for any integer $k$ with $1 \leq k \leq n$ and any subset $A$ of $\R^n,$ we have \[ \{\Lambda \in X_n : \dim_\R\left( \mathrm{span}_{\R}\left( A \cap \Lambda_{\rm pr} \right)\right) < k \} \subseteq \left\lbrace \Lambda \in X_n : A^k \cap \left(\Lambda^k\right)_{\rm pr} = \varnothing \right\rbrace. \] The second corollary is a conditional generalization of \cite[Theorem 1]{Metrical}, albeit without the error term. (However, it is quite likely possible to obtain an error term by arguing more carefully; see also \cite[Chapter 1, Lemma 10]{Sprind} for a result of V. G. Sprind\v zuk that drew upon and generalized the work of W. M. Schmidt.) In a sense, the second corollary is a quantitative analogue of Corollaries \ref{InfMain} and \ref{naught}. Let us also mention that the proof of Corollary \ref{CorSchmidt} is similar to an argument used in a probability textbook by R. T. Durrett: see \cite[Chapter 1, \S 6.8, Theorem]{Durrett}. The formulations and proofs of Theorem \ref{ThmInequality} and its corollaries have been motivated and informed by joint work of D. Ya. Kleinbock and the author: see the preprint \cite{KS}. 

\begin{cor}\label{CorMinkowski}
There exists a constant $\ds \omega_{n, *} \in \R_{>0}$ such that for any integer $\ell$ with $\ds 1 \leq \ell \leq \frac{n-1}{2}$ and any Borel subset $A$ of $\R^n$ with $0 < m(A) < +\infty,$ we have 
\begin{equation}\label{MinCorBound}
\eta\left( \left\lbrace \Lambda \in X_n : A^\ell \cap \left(\Lambda^\ell\right)_{\rm pr} = \varnothing \right\rbrace \right) \leq \frac{\omega_{n, *}}{m(A)}. \end{equation} \end{cor}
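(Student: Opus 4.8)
The plan is to deduce Corollary~\ref{CorMinkowski} from Theorem~\ref{ThmInequality} by a second-moment (Chebyshev/Paley--Zygmund) argument, exactly as one passes from a variance bound to a lower tail bound. Fix an integer $\ell$ with $1 \leq \ell \leq \frac{n-1}{2}$ and a Borel set $A$ with $0 < m(A) < +\infty$. Write $\mu := \int_{X_n} \mathrm{Pr}_{A, \ell} \, d\eta = \frac{(m(A))^\ell}{\prod_{j=0}^{\ell-1} \zeta(n-j)}$ for the mean supplied by Theorem~\ref{ThmInequality}. The event $\{\Lambda : A^\ell \cap (\Lambda^\ell)_{\rm pr} = \varnothing\}$ is precisely $\{\Lambda : \mathrm{Pr}_{A, \ell}(\Lambda) = 0\}$, and on this event we have $|\mathrm{Pr}_{A,\ell}(\Lambda) - \mu| = \mu$. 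Hence by Chebyshev's inequality,
\begin{equation}\label{eq:cheb-step}
\eta\left( \left\lbrace \Lambda \in X_n : A^\ell \cap \left(\Lambda^\ell\right)_{\rm pr} = \varnothing \right\rbrace \right) \leq \eta\left( \left\lbrace \Lambda : |\mathrm{Pr}_{A,\ell}(\Lambda) - \mu| \geq \mu \right\rbrace \right) \leq \frac{1}{\mu^2} \norm{\mathrm{Pr}_{A,\ell} - \mu \mathbbm{1}_{X_n}}_2^2.
\end{equation}

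Next I would insert the $L^2$ bound from \eqref{RogType}: the right-hand side of \eqref{eq:cheb-step} is at most $\mu^{-2} \cdot (\omega_n'')^2 (m(A))^{2\ell - 1}$. Substituting $\mu = (m(A))^\ell / \prod_{j=0}^{\ell-1}\zeta(n-j)$ gives
\begin{equation}\label{eq:combine-step}
\eta\left( \left\lbrace \Lambda \in X_n : A^\ell \cap \left(\Lambda^\ell\right)_{\rm pr} = \varnothing \right\rbrace \right) \leq (\omega_n'')^2 \left(\prod_{j=0}^{\ell-1}\zeta(n-j)\right)^2 \frac{(m(A))^{2\ell-1}}{(m(A))^{2\ell}} = \frac{(\omega_n'')^2 \left(\prod_{j=0}^{\ell-1}\zeta(n-j)\right)^2}{m(A)}.
\end{equation}
The power of $m(A)$ miraculously collapses to $-1$, which is exactly the shape demanded by \eqref{MinCorBound}. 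To get a single constant $\omega_{n,*}$ that works uniformly in $\ell$, I would take the maximum over the finitely many admissible values $1 \leq \ell \leq \frac{n-1}{2}$: set $\omega_{n,*} := (\omega_n'')^2 \max_{1 \leq \ell \leq (n-1)/2} \left(\prod_{j=0}^{\ell-1}\zeta(n-j)\right)^2$, noting that each $\zeta(n-j) \geq \zeta(n) \geq 1$ is finite since $n - j \geq n - \ell + 1 \geq \frac{n+3}{2} \geq 3 > 1$, so this maximum is a finite positive number depending only on $n$.

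There is essentially no obstacle here: the argument is a textbook variance-to-tail estimate, and all the analytic content has already been placed in Theorem~\ref{ThmInequality}. The only minor points requiring care are (i) confirming that $\mu > 0$ so that division in \eqref{eq:cheb-step} is legitimate --- this holds because $m(A) > 0$ and the zeta-values are finite and positive --- and (ii) verifying the set inclusion $\{\mathrm{Pr}_{A,\ell} = 0\} = \{A^\ell \cap (\Lambda^\ell)_{\rm pr} = \varnothing\}$, which is immediate from the definition $\mathrm{Pr}_{A,\ell}(\Lambda) = \card(A^\ell \cap (\Lambda^\ell)_{\rm pr})$ together with $\eta$-measurability of $\mathrm{Pr}_{A,\ell}$ for Borel $A$ (already noted in the paper). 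One should also remark that the conditional nature of the corollary is inherited verbatim from Theorem~\ref{ThmInequality}, so no additional hypotheses are introduced.
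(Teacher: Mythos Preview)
Your proof is correct and follows essentially the same approach as the paper: a Chebyshev/Markov second-moment argument applied to Theorem~\ref{ThmInequality}, giving the bound $(\omega_n'')^2\bigl(\prod_{j=0}^{\ell-1}\zeta(n-j)\bigr)^2(m(A))^{-1}$. The only cosmetic difference is in making the constant uniform in $\ell$: you take the maximum over the finitely many admissible $\ell$, while the paper bounds each factor by $\zeta(2)$ and sets $\omega_{n,*}:=(\omega_n'')^2(\zeta(2))^{2n}$.
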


\begin{cor}\label{CorSchmidt}
Let $\ds \{A_t\}_{t \in \R_{\geq 1}}$ be a collection of Borel subsets of $\R^n$ that satisfies the following conditions:
\begin{itemize}
\item[\rm (i)] For any $r \in \R_{\geq 1},$ we have $0<m(A_r) < +\infty.$ 
\item[\rm (ii)] For any real numbers $r$ and $s$ with $1 \leq r \leq s < +\infty,$ we have $A_r \subseteq A_s.$
\item[\rm (iii)] The function $\R_{\geq 1} \to [m(A_1), +\infty)$ given by $t \mapsto m(A_t)$ is surjective.  
\end{itemize} 
Then for any integer $\ell$ with $\ds 1 \leq \ell \leq \frac{n-1}{2}$ and $\eta$-almost every $\Lambda \in X_n,$ we have \[ \lim_{t \to +\infty} \frac{\mathrm{Pr}_{A_t, \ell}(\Lambda)}{\left(m(A_t)\right)^\ell} = \frac{1}{ \prod_{j=0}^{\ell-1} \zeta\left(n-j\right)}. \] 
\end{cor}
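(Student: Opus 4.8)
The plan is to run the classical "subsequence plus monotone interpolation" argument used to upgrade an $L^2$ weak law to a strong law (cf. \cite[Chapter 1, \S 6.8, Theorem]{Durrett}), with the quantitative estimate of Theorem \ref{ThmInequality} playing the role of the variance bound. Fix an integer $\ell$ with $1 \le \ell \le \frac{n-1}{2}$ and write $c := \left(\prod_{j=0}^{\ell-1}\zeta(n-j)\right)^{-1}$. By Theorem \ref{ThmInequality}, for every $t \in \R_{\ge 1}$ the function $\mathrm{Pr}_{A_t,\ell}$ lies in $L^2(X_n,\eta)$, has mean $c\,(m(A_t))^{\ell}$, and satisfies $\mathrm{Pr}_{A_t,\ell}(\Lambda) < +\infty$ for $\eta$-a.e.\ $\Lambda$ (the $L^2$ distance to a constant being finite forces finiteness a.e.).

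Next I would select the interpolating subsequence. Put $k_0 := \lceil \sqrt{m(A_1)}\,\rceil$. For each integer $k \ge k_0$ we have $k^2 \in [m(A_1), +\infty)$, so condition (iii) lets us choose $t_k \in \R_{\ge 1}$ with $m(A_{t_k}) = k^2$. Since $t \mapsto m(A_t)$ is nondecreasing (condition (ii)), any two choices with distinct values are ordered the same way as those values, so $(t_k)_{k \ge k_0}$ is strictly increasing; and $t_k \to +\infty$ because $m(A_{t_k}) = k^2 \to +\infty$ while $m(A_t) < +\infty$ for every $t \in \R_{\ge 1}$ by (i). Now for fixed $\epsilon > 0$, Chebyshev's inequality together with Theorem \ref{ThmInequality} gives
\[
\eta\!\left(\left\{\Lambda \in X_n : \left| \mathrm{Pr}_{A_{t_k},\ell}(\Lambda) - c\,k^{2\ell}\right| > \epsilon\, k^{2\ell}\right\}\right)
\le \frac{\left\|\mathrm{Pr}_{A_{t_k},\ell} - c\,k^{2\ell}\,\mathbbm{1}_{X_n}\right\|_2^2}{\epsilon^2\, k^{4\ell}}
\le \frac{(\omega_n'')^2\, k^{4\ell-2}}{\epsilon^2\, k^{4\ell}}
= \frac{(\omega_n'')^2}{\epsilon^2\, k^2},
\]
and since $\sum_{k} k^{-2} < +\infty$, the Borel--Cantelli lemma shows that for $\eta$-a.e.\ $\Lambda$ one has $\left|\mathrm{Pr}_{A_{t_k},\ell}(\Lambda)/k^{2\ell} - c\right| \le \epsilon$ for all sufficiently large $k$. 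Applying this with $\epsilon = 1/r$, $r \in \Z_{\ge 1}$, and intersecting the resulting full-measure sets (together with the full-measure set on which every $\mathrm{Pr}_{A_{t_k},\ell}$ is finite), I obtain a Borel set $X_n^{\ast}$ with $\eta(X_n^{\ast}) = 1$ such that $\lim_{k \to \infty} \mathrm{Pr}_{A_{t_k},\ell}(\Lambda)/k^{2\ell} = c$ for every $\Lambda \in X_n^{\ast}$.

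Finally I would interpolate using monotonicity. Fix $\Lambda \in X_n^{\ast}$. For $t \ge t_{k_0}$ let $k = k(t)$ be the unique integer with $t_k \le t < t_{k+1}$; then $k(t) \to \infty$ as $t \to \infty$. By condition (ii) we have $A_{t_k} \subseteq A_t \subseteq A_{t_{k+1}}$, hence (passing to $\ell$-tuples and intersecting with $(\Lambda^\ell)_{\rm pr}$) $\mathrm{Pr}_{A_{t_k},\ell}(\Lambda) \le \mathrm{Pr}_{A_t,\ell}(\Lambda) \le \mathrm{Pr}_{A_{t_{k+1}},\ell}(\Lambda)$, while $k^2 = m(A_{t_k}) \le m(A_t) \le m(A_{t_{k+1}}) = (k+1)^2$. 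Combining these,
\[
\frac{\mathrm{Pr}_{A_{t_k},\ell}(\Lambda)}{k^{2\ell}}\cdot\frac{k^{2\ell}}{(k+1)^{2\ell}}
\le \frac{\mathrm{Pr}_{A_t,\ell}(\Lambda)}{(m(A_t))^{\ell}}
\le \frac{\mathrm{Pr}_{A_{t_{k+1}},\ell}(\Lambda)}{(k+1)^{2\ell}}\cdot\frac{(k+1)^{2\ell}}{k^{2\ell}}.
\]
As $t \to \infty$, we have $k \to \infty$, so both outer expressions tend to $c \cdot 1 = c$ by the definition of $X_n^{\ast}$ and because $\left(k/(k+1)\right)^{2\ell} \to 1$; the squeeze theorem yields $\lim_{t\to\infty}\mathrm{Pr}_{A_t,\ell}(\Lambda)/(m(A_t))^{\ell} = c$. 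Since $\eta(X_n^{\ast}) = 1$, this proves the corollary. The step needing the most care is the construction of $(t_k)$ from hypotheses (i)--(iii) alone — $t \mapsto m(A_t)$ need be neither continuous nor strictly increasing, so one must argue, via monotonicity and surjectivity only, that a strictly increasing sequence with $m(A_{t_k}) = k^2$ exists — after which everything is the routine Chebyshev/Borel--Cantelli/monotonicity machinery, the substantive content being entirely contained in Theorem \ref{ThmInequality}.
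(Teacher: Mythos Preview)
Your proof is correct and follows essentially the same approach as the paper's: choose $t_k$ with $m(A_{t_k})=k^2$, apply Chebyshev/Markov together with the $L^2$ bound of Theorem~\ref{ThmInequality}, invoke Borel--Cantelli to get almost-sure convergence along $(t_k)$, and then sandwich using monotonicity and $(k/(k+1))^{2\ell}\to 1$. Your write-up is in fact slightly more careful than the paper's in justifying that $(t_k)$ is strictly increasing and unbounded and in spelling out the countable intersection over $\varepsilon=1/r$.
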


\medskip

Naturally, we also obtain the following. 

\begin{cor}\label{tuplenaught}
For any integer $\ell$ with $\ds 1 \leq \ell \leq \frac{n-1}{2}$ and any Borel subset $A$ of $\R^n$ with $m(A) = +\infty,$ we have $\ds \eta\left(\left\lbrace \Lambda \in X_n : \mathrm{Pr}_{A, \ell}(\Lambda) = \aleph_0 \right\rbrace\right) = 1.$ 
\end{cor}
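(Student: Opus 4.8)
The plan is to deduce Corollary \ref{tuplenaught} from Corollary \ref{CorSchmidt} by exhibiting, for any given Borel set $A$ with $m(A) = +\infty$, a suitable exhaustion of $A$ by Borel sets of finite measure that satisfies hypotheses (i)--(iii) of Corollary \ref{CorSchmidt}. Concretely, I would first reduce to the case where $m(A \cap B_r) \to +\infty$ as $r \to +\infty$, where $B_r$ denotes the closed ball of radius $r$ centered at the origin; since $A = \bigcup_{r \geq 1}(A \cap B_r)$ and measure is continuous from below, $\lim_{r \to +\infty} m(A \cap B_r) = m(A) = +\infty$, and also each $m(A \cap B_r)$ is finite. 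If $m(A \cap B_1) = 0$, replace the starting radius by some $r_0$ with $m(A \cap B_{r_0}) > 0$; such an $r_0$ exists since the measures increase to $+\infty$. Thus, after relabeling, we may assume $0 < m(A \cap B_1) < +\infty$.

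The next step is to repair the possible failure of the surjectivity hypothesis (iii): the function $r \mapsto m(A \cap B_r)$ need not be continuous (it jumps wherever $A$ has positive measure on a sphere) and need not be strictly increasing. To remedy this, I would define, for $t \in \R_{\geq 1}$, a set $A_t$ that interpolates. One clean way: set $\rho(t) := \sup\{ r \geq 1 : m(A \cap B_r) \leq t + m(A\cap B_1) - 1\}$ for suitable normalization, or more simply, since we only need \emph{some} increasing family of finite-measure Borel subsets of $A$ whose measures surject onto $[m(A_1), +\infty)$, use the standard device of adding a continuously growing "sliver." Precisely: pick an auxiliary Borel set $D \subseteq \R^n$ disjoint from $A$ with $m(D \cap B_r)$ continuous and strictly increasing to $+\infty$ in $r$ — e.g. $D$ a thin cone or half-space slab disjoint from a translate — and then absorb the jumps. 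Actually the cleanest approach avoids auxiliary sets entirely: one shows the monotone family $\{A \cap B_r\}_{r \geq 1}$ already forces $\mathrm{Pr}_{A \cap B_r, \ell}(\Lambda) \to +\infty$ for a.e.\ $\Lambda$ by a sandwiching argument, because between consecutive values one can still apply Corollary \ref{CorSchmidt} after a reparametrization that makes $t \mapsto m(A_t)$ surjective (define $A_t$ for $t$ in the range of $r \mapsto m(A\cap B_r)$ directly, and for the countably many gaps caused by spherical atoms, interpolate by taking $A \cap B_{r^-} \cup (\text{a measurable subset of } A \cap \partial B_{r^{}} \text{ of the prescribed intermediate measure})$, which exists by non-atomicity of Lebesgue measure on the sphere shell). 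Either way one produces a family $\{A_t\}_{t \geq 1}$ with $A_t \subseteq A$, satisfying (i)--(iii).

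Applying Corollary \ref{CorSchmidt} to this family then gives, for each integer $\ell$ with $1 \leq \ell \leq \frac{n-1}{2}$ and for $\eta$-a.e.\ $\Lambda$, that $\mathrm{Pr}_{A_t, \ell}(\Lambda)/(m(A_t))^\ell \to \left(\prod_{j=0}^{\ell-1}\zeta(n-j)\right)^{-1} > 0$; since $m(A_t) \to +\infty$, this forces $\mathrm{Pr}_{A_t, \ell}(\Lambda) \to +\infty$. Because $A_t \subseteq A$ and $E \mapsto \mathrm{Pr}_{E, \ell}(\Lambda)$ is monotone under inclusion (any primitive $\ell$-tuple in $A_t^\ell$ is a primitive $\ell$-tuple in $A^\ell$), we get $\mathrm{Pr}_{A, \ell}(\Lambda) \geq \mathrm{Pr}_{A_t, \ell}(\Lambda)$ for all $t$, hence $\mathrm{Pr}_{A, \ell}(\Lambda) = +\infty$ for $\eta$-a.e.\ $\Lambda$. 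Finally, $\mathrm{Pr}_{A, \ell}(\Lambda)$, being the cardinality of a countable set (a subset of $(\Lambda^\ell)$, and $\Lambda$ is countable), is either finite or equal to $\aleph_0$; combined with $\mathrm{Pr}_{A,\ell}(\Lambda) = +\infty$ a.e., we conclude $\mathrm{Pr}_{A, \ell}(\Lambda) = \aleph_0$ for $\eta$-a.e.\ $\Lambda$, which is the claim.

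The main obstacle I anticipate is purely technical rather than conceptual: verifying hypothesis (iii), i.e.\ constructing the monotone family $\{A_t\}$ whose measure function is genuinely \emph{surjective} onto $[m(A_1), +\infty)$, when the naive choice $A \cap B_t$ has a measure function with jump discontinuities (at radii where $A$ meets a sphere in positive measure) and flat stretches (where $A$ is locally null). Handling this cleanly requires invoking the non-atomicity of Lebesgue measure to subdivide the "jump" contributions continuously — a routine but slightly fiddly measure-theoretic construction. Everything else (continuity from below of measure, monotonicity of $\mathrm{Pr}_{\cdot, \ell}$, the countable/$\aleph_0$ dichotomy) is immediate.
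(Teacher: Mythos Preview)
Your approach is correct but takes a different route from the paper, and the obstacle you anticipate is in fact a non-issue.

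First, the phantom obstacle: the map $r \mapsto m(A \cap B_r)$ (with $B_r$ the closed Euclidean ball of radius $r$) is \emph{automatically continuous}. The jump at a radius $r$ would be $m(A \cap \partial B_r)$, but $\partial B_r$ is an $(n-1)$-sphere and hence has $n$-dimensional Lebesgue measure zero, so $m(A \cap \partial B_r) = 0$ for every $r$. Thus there are no jump discontinuities, and your proposed interpolation on spherical shells (which would in any case fail, since there is no mass there to subdivide) is unnecessary. Flat stretches are likewise harmless: hypothesis~(iii) asks only for surjectivity, not injectivity. Consequently, after choosing $r_0$ with $m(A \cap B_{r_0}) > 0$ and setting $A_t := A \cap B_{t + r_0 - 1}$ for $t \geq 1$, the family $\{A_t\}_{t \geq 1}$ satisfies (i)--(iii) of Corollary~\ref{CorSchmidt} directly, and the rest of your argument (monotonicity of $\mathrm{Pr}_{\cdot,\ell}$, countability of $\Lambda$) goes through without further work.

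As for the comparison: the paper does \emph{not} invoke Corollary~\ref{CorSchmidt}. Instead it goes back to the $L^2$ bound~\eqref{RogType} of Theorem~\ref{ThmInequality} and uses the elementary fact that $L^2$-convergence of $\mathrm{Pr}_{A \cap B_t,\ell}/(m(A\cap B_t))^\ell$ to $\theta_{n,\ell}$ implies almost-everywhere convergence along some subsequence $(t_k)$. This sidesteps hypotheses (i)--(iii) entirely. Your route via Corollary~\ref{CorSchmidt} is a perfectly valid alternative and, once you drop the unnecessary interpolation, is comparably short; it has the mild advantage of yielding the full limit along the continuous parameter $t$ rather than merely along a subsequence, though for the corollary at hand either suffices.
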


We also state and prove a generalization of Theorem \ref{origAM} that also generalizes Theorem \ref{Main} in the case $n=3.$ The author gratefully acknowledges that the statement and proof of the following theorem are due to D. Ya. Kleinbock. 

\begin{thm}[\cite{Kleinbock}]\label{DY}
Suppose $n$ is an arbitrary element of $\Z_{\geq 2}.$ There exists a constant $\ds \omega_n ' \in \R_{>0}$ such that for any Borel subset $A$ of $\R^n$ with  $0 < m(A) < +\infty,$ we have 
\begin{equation}\label{oneDY}
     \eta\left( \left\lbrace\Lambda \in X_n : \card\left( A \cap \Lambda_{\rm pr}\right) \leq 2 \right\rbrace \right) \leq \frac{\omega_n'}{m(A)}, 
\end{equation} whence \begin{equation}\label{primitiveDY}
\eta\left( \left\lbrace\Lambda \in X_n : \dim_\R\left( \mathrm{span}_{\R}\left( A \cap \Lambda_{\rm pr} \right)\right) < 2 \right\rbrace \right) \leq \frac{\omega_{n}'}{m(A)}. 
\end{equation}
\end{thm}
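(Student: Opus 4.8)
The plan is to use the classical second-moment machinery for the Siegel transform, following the strategy of Athreya--Margulis but tracking a bit more information so that we rule out not just $A\cap\Lambda_{\rm pr}=\varnothing$ but even $\card(A\cap\Lambda_{\rm pr})\le 2$. For a Borel set $A$ of finite measure, let $\widehat{\mathbbm 1_A}$ denote the Siegel transform of $\mathbbm 1_A$ restricted to primitive vectors, i.e.\ $\widehat{\mathbbm 1_A}(\Lambda)=\sum_{v\in\Lambda_{\rm pr}}\mathbbm 1_A(v)=\card(A\cap\Lambda_{\rm pr})$. Siegel's mean value formula gives $\int_{X_n}\widehat{\mathbbm 1_A}\,d\eta=\frac{m(A)}{\zeta(n)}$, and Rogers' formula for the second moment of the primitive Siegel transform gives $\int_{X_n}\bigl(\widehat{\mathbbm 1_A}\bigr)^2\,d\eta=\frac{1}{\zeta(n)}\bigl(m(A)+m(A)^2\bigr)$ when $n\ge 3$ (for $n\ge 2$ one has, at worst, an inequality of the form $\int \widehat{\mathbbm 1_A}^2\,d\eta\le \frac{m(A)^2}{\zeta(n)^2}+c_n\,m(A)$; the point is that the variance is $O(m(A))$ with $c_n$ depending only on $n$, which is exactly what happens because the only "diagonal" contribution to the two-point function from primitive vectors $v,w$ with $\mathbbm R v=\mathbbm R w$ is $v=\pm w$).

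From here one runs a Chebyshev/Paley--Zygmund argument. Set $\mu:=\int_{X_n}\widehat{\mathbbm 1_A}\,d\eta=\frac{m(A)}{\zeta(n)}$ and $\sigma^2:=\mathrm{Var}(\widehat{\mathbbm 1_A})\le c_n\,m(A)$. On the event $\{\card(A\cap\Lambda_{\rm pr})\le 2\}$ we have $\widehat{\mathbbm 1_A}\le 2$, so $|\widehat{\mathbbm 1_A}-\mu|\ge \mu-2$ on that event. When $m(A)$ is large enough that $\mu-2\ge \mu/2$, Chebyshev's inequality yields
\begin{equation}\label{eq:DYplan}
\eta\bigl(\{\card(A\cap\Lambda_{\rm pr})\le 2\}\bigr)\le \frac{\sigma^2}{(\mu-2)^2}\le \frac{4\sigma^2}{\mu^2}\le \frac{4 c_n\,m(A)\,\zeta(n)^2}{m(A)^2}=\frac{4c_n\zeta(n)^2}{m(A)},
\end{equation}
which is of the desired shape $\omega_n'/m(A)$. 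For the bounded range of $m(A)$ (say $0<m(A)\le M_n$ for the threshold $M_n$ forcing $\mu-2\ge\mu/2$), the probability is trivially at most $1\le (M_n)/m(A)$, so enlarging the constant to $\omega_n':=\max\{4c_n\zeta(n)^2,\ M_n\}$ handles all $A$ with $0<m(A)<+\infty$. This proves \eqref{oneDY}. The implication \eqref{oneDY}$\Rightarrow$\eqref{primitiveDY} is immediate: if $\dim_{\mathbbm R}\operatorname{span}_{\mathbbm R}(A\cap\Lambda_{\rm pr})<2$ then $A\cap\Lambda_{\rm pr}$ is contained in a single line through the origin, and since primitive vectors on a line come in pairs $\pm v$ together with $0\notin A$ (after possibly discarding the origin), one gets $\card(A\cap\Lambda_{\rm pr})\le 2$; hence the event in \eqref{primitiveDY} is contained in the event in \eqref{oneDY}.

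The main obstacle is establishing the second-moment bound $\int_{X_n}\widehat{\mathbbm 1_A}^2\,d\eta\le \mu^2+c_n\,m(A)$ uniformly in $A$ with $c_n$ depending only on $n$, including the delicate case $n=2$ (where the usual Rogers formula is problematic and this is presumably why this theorem is credited to Kleinbock and stated separately). For $n\ge 3$ this follows from the known Rogers second-moment formula for primitive vectors; for $n=2$ one must argue directly, e.g.\ by an explicit computation on $\mathrm{SL}_2(\mathbb R)/\mathrm{SL}_2(\mathbb Z)$ using the Iwasawa coordinates, or by a geometry-of-numbers estimate showing that the number of ordered pairs of non-proportional primitive vectors of a unimodular lattice lying in $A$ has expectation at most $m(A)^2/\zeta(2)^2$ up to an $O(m(A))$ error, the error coming precisely from pairs of proportional primitive vectors (which are $\pm v$) and contributing $O(m(A))$. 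Once that uniform variance bound is in hand, the rest is the routine Chebyshev estimate above.
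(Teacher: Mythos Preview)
Your proposal is correct and matches the paper's approach: apply Chebyshev's inequality to $P_A:=\widehat{\mathbbm 1_A}^{\ \rm pr}$ using the variance bound $\|P_A-\mu\mathbbm 1_{X_n}\|_2^2\le c_n\,m(A)$ (which the paper simply cites from \cite[(4.4)]{Log} for $n=2$ and \cite[(0.2)]{KY} for $n\ge 3$, rather than deriving), then absorb the bounded range of $m(A)$ into the constant. One minor correction: your stated exact second-moment formula should have leading term $m(A)^2/\zeta(n)^2$, not $m(A)^2/\zeta(n)$, but your subsequent ``at worst'' inequality already has the correct leading term and is all the argument requires.
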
 

\medskip

Let us also mention that Theorem \ref{DY} readily yields corollaries whose statements and proofs are analogous to those of Corollaries \ref{InfMain} and \ref{naught}. 

\begin{rmk} \rm
Suppose $n$ is an arbitrary element of $\Z_{\geq 2}.$ In the result \cite[Corollary 2.14]{Str}, A. Str\"ombergsson proves that the bound \eqref{AMeq} of Athreya and Margulis is sharp in the following precise sense: There exists some constant $\ds E_n \in \R_{>0}$ and there exists some $\ds V_n \in \R_{>0}$ such that for each $v \in \left[V_n, +\infty\right),$ there exists some bounded convex subset $A_v$ of $\R^n$ with $m\left(A_v\right) = v$ for which \begin{equation*}
     \eta\left( \{\Lambda \in X_n : \dim_\R\left( \mathrm{span}_{\R}\left( A_v \cap \Lambda \right)\right) < 1 \} \right) \geq \frac{E_n}{m\left(A_v\right)}. 
\end{equation*} It follows, \textit{a fortiori}, that the bounds in inequalities \eqref{onemain}, \eqref{MinCorBound}, \eqref{oneDY}, and \eqref{primitiveDY} are analogously sharp.           
\end{rmk}  

\subsection*{Acknowledgements} The author would like to thank D. Ya. Kleinbock, his doctoral adviser, for various discussions and comments appertaining to this paper. The author would like to thank J. S. Athreya and S. K. Fairchild for several helpful conversations regarding Rogers's higher moment formulae and related topics; in addition, the author would like to thank S. K. Fairchild for bringing to his attention the paper \cite{AG}. The author is immensely grateful to S. Kim for bringing to his attention the paper \cite{MeanSchmidt} and for remarking to him that what is essentially the statement of Lemma \ref{polball} in this paper follows from \cite[Theorem 2]{MeanSchmidt} and analysis of (21) in the proof of \cite[Theorem 2]{MeanSchmidt}. The author would like to thank L. L. Pham for a conversation that led the author to generalize results in an earlier version of this paper; he would also like to thank A. Gupta for an interesting discussion concerning the Euler-Riemann zeta function. Finally, the author would like to thank the anonymous referee for a meticulous reading and a thorough report whose suggestions helped improve this paper.  

\section{Proofs of Results}
The approach of Athreya and Margulis in \cite{Log} relies on the Siegel Mean Value Theorem \cite{Siegel} and its analogue for the second moment of the Siegel transform, which was proved by Rogers \cite[Theorem 4]{MeanRog}. Our approach here is a modification of that of Athreya and Margulis; we proceed by defining the Siegel transform and stating a version of Rogers's Theorem 4 that will be sufficient for our purposes in this note. 

\begin{defn}
If $f: \R^n \to \R_{\geq 0}$ is any function, then we define its \textsl{primitive Siegel transform} $\ds \widehat{f}^{{ \ }^{\rm pr}} : X_n \to [0, +\infty]$ and \textsl{Siegel transform} $\ds \widehat{f} : X_n \to [0, +\infty]$ by \[ \widehat{f}^{{ \ }^{\rm pr}}(\Lambda) := \sum_{v \in \Lambda_{\rm pr}} f(v) \hspace{0.7in} \text{and} \hspace{0.7in} \widehat{f}(\Lambda) := \sum_{v \in \left(\Lambda \ssm \left\lbrace 0_{\R^n} \right\rbrace \right)} f(v). \] Notice that if $f$ is Borel measurable, then each of $\ds \widehat{f}^{{ \ }^{\rm pr}}$ and $\ds \widehat{f}$ is $\eta$-measurable. 
\end{defn}

The celebrated Siegel Mean Value Theorem then states the following. 
\begin{thm}\label{SiegelMean}\cite{Siegel}
Let $\ds f: \R^n \to \R_{\geq 0}$ be any Borel measurable function in $\ds L^1\left(\R^n, m\right).$ Then \[ \int_{X_n} \widehat{f}^{{ \ }^{\rm pr}} \ d\eta = \left(\zeta(n)\right)^{-1}\int_{\R^n} f \ dm \hspace{0.7in} \text{and} \hspace{0.75in }\int_{X_n} \widehat{f} \ d\eta = \int_{\R^n} f \ dm. \] 
\end{thm}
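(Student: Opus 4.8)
The plan is to reduce both stated identities to the determination of a single normalization constant by an invariance argument, and then to pin down that constant. First I would note that the maps $f \mapsto \int_{X_n} \widehat{f}\, d\eta$ and $f \mapsto \int_{X_n} \widehat{f}^{\,\mathrm{pr}}\, d\eta$, defined on the cone of nonnegative Borel functions on $\R^n$, are $\R_{\geq 0}$-linear and continuous under increasing pointwise limits (apply the monotone convergence theorem both to the defining sums over $\Lambda$ and to the integrals over $X_n$); consequently $E \mapsto \int_{X_n} \widehat{\mathbbm{1}_E}\, d\eta$ and $E \mapsto \int_{X_n} \widehat{\mathbbm{1}_E}^{\,\mathrm{pr}}\, d\eta$ are Borel measures on $\R^n \smallsetminus \{0_{\R^n}\}$, and a standard integrability estimate for Siegel transforms of bounded, compactly supported functions shows that these measures are locally finite, hence Radon. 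The crucial observation is that both measures are invariant under the linear action of $\mathrm{SL}_n(\R)$ on $\R^n \smallsetminus \{0_{\R^n}\}$: for $g \in \mathrm{SL}_n(\R)$ one has $\widehat{f \circ g}(\Lambda) = \widehat{f}(g\Lambda)$ and $\widehat{f \circ g}^{\,\mathrm{pr}}(\Lambda) = \widehat{f}^{\,\mathrm{pr}}(g\Lambda)$, since $g$ sends $\Z$-bases of $\Lambda$ to $\Z$-bases of $g\Lambda$, and integrating against the $\mathrm{SL}_n(\R)$-invariant measure $\eta$ yields the asserted invariance. Since $n \geq 2$, the group $\mathrm{SL}_n(\R)$ acts transitively on $\R^n \smallsetminus \{0_{\R^n}\}$, with point stabilizer $H := \mathrm{Stab}_{\mathrm{SL}_n(\R)}(e_1) \cong \R^{n-1} \rtimes \mathrm{SL}_{n-1}(\R)$ (here $e_1$ is the first standard basis vector), which is unimodular; hence there is, up to scaling, only one invariant Radon measure on this homogeneous space, namely Lebesgue measure. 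Therefore there are constants $c_n, c_n^{\mathrm{pr}} \in \R_{\geq 0}$ with $\int_{X_n} \widehat{f}\, d\eta = c_n \int_{\R^n} f\, dm$ and $\int_{X_n} \widehat{f}^{\,\mathrm{pr}}\, d\eta = c_n^{\mathrm{pr}} \int_{\R^n} f\, dm$ for every nonnegative Borel $f$, both sides lying in $[0,+\infty]$ and being finite when $f \in L^1(\R^n, m)$.

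Next I would link the two constants. Given nonnegative Borel $f$, set $F(x) := \sum_{k \in \Z_{\geq 1}} f(kx)$, which is again nonnegative and Borel. The bijection $\Z_{\geq 1} \times \Lambda_{\mathrm{pr}} \to \Lambda \smallsetminus \{0_{\R^n}\}$, $(k,v) \mapsto kv$, recalled in the introduction gives $\widehat{f}(\Lambda) = \widehat{F}^{\,\mathrm{pr}}(\Lambda)$ for every $\Lambda \in X_n$, while the change of variables $x \mapsto k^{-1}x$ gives $\int_{\R^n} F\, dm = \sum_{k \in \Z_{\geq 1}} k^{-n} \int_{\R^n} f\, dm = \zeta(n)\int_{\R^n} f\, dm$ (here $n \geq 2$ guarantees $\zeta(n) < +\infty$). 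Integrating $\widehat{f} = \widehat{F}^{\,\mathrm{pr}}$ over $X_n$ and invoking the two formulas from the previous paragraph yields $c_n = \zeta(n)\, c_n^{\mathrm{pr}}$. It therefore remains only to prove $c_n = 1$, which then gives both displayed identities of the theorem (the first with $c_n^{\mathrm{pr}} = \zeta(n)^{-1}$).

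The step $c_n = 1$ is the main obstacle, and it is the classical Siegel normalization. I would obtain it by unfolding $\int_{X_n} \widehat{f}^{\,\mathrm{pr}}\, d\eta$ over $\mathrm{SL}_n(\R)/\mathrm{SL}_n(\Z)$: since $\mathrm{SL}_n(\Z)$ acts transitively on the set of primitive vectors of $\Z^n$ with stabilizer $H \cap \mathrm{SL}_n(\Z)$, one rewrites $\widehat{f}^{\,\mathrm{pr}}(g\, \mathrm{SL}_n(\Z)) = \sum_{v \in (\Z^n)_{\mathrm{pr}}} f(gv)$ as a sum over $\mathrm{SL}_n(\Z)/(H \cap \mathrm{SL}_n(\Z))$ and unfolds, obtaining --- after fixing compatible Haar normalizations so that $\mathrm{SL}_n(\R)/H \cong \R^n \smallsetminus \{0_{\R^n}\}$ carries Lebesgue measure --- the identity \[ \int_{X_n} \widehat{f}^{\,\mathrm{pr}}\, d\eta \ = \ \frac{\mathrm{vol}\!\left(H/(H \cap \mathrm{SL}_n(\Z))\right)}{\mathrm{vol}\!\left(\mathrm{SL}_n(\R)/\mathrm{SL}_n(\Z)\right)}\,\int_{\R^n} f\, dm. \] Because $H/(H \cap \mathrm{SL}_n(\Z))$ fibers over $\mathrm{SL}_{n-1}(\R)/\mathrm{SL}_{n-1}(\Z)$ with compact torus fibers of volume $1$ in this normalization, this identifies $c_n^{\mathrm{pr}}$ with the covolume ratio $\mathrm{vol}(\mathrm{SL}_{n-1}(\R)/\mathrm{SL}_{n-1}(\Z))/\mathrm{vol}(\mathrm{SL}_n(\R)/\mathrm{SL}_n(\Z))$, and combining this with $c_n = \zeta(n)\,c_n^{\mathrm{pr}}$ shows that $c_n = 1$ is equivalent to the classical covolume formula $\mathrm{vol}(\mathrm{SL}_n(\R)/\mathrm{SL}_n(\Z)) = \prod_{k=2}^{n} \zeta(k)$ (with $\mathrm{SL}_1$ contributing $1$); this last fact may be proved via Minkowski's reduction theory or simply invoked from \cite{Siegel}. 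Once $c_n = 1$ is in hand, the theorem follows.
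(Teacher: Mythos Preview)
The paper does not actually prove this theorem: it is stated with attribution to \cite{Siegel}, and the remark that follows merely notes that Siegel's original hypotheses (compactly supported, bounded, Riemann-integrable $f$) can be relaxed to those stated by routine approximation. Your proposal therefore supplies considerably more than the paper does, and what you supply is essentially correct. The invariance-and-uniqueness argument of your first paragraph is precisely the Veech-style strategy the paper itself deploys later, in the proof of Theorem~\ref{indepintegral}; the relation $c_n = \zeta(n)\,c_n^{\mathrm{pr}}$ of your second paragraph is exactly the reduction the paper records in the remarks preceding that theorem. Two cautions on the normalization step. First, reducing $c_n = 1$ to the covolume formula $\mathrm{vol}(\mathrm{SL}_n(\R)/\mathrm{SL}_n(\Z)) = \prod_{k=2}^{n} \zeta(k)$ and then ``invoking'' it from \cite{Siegel} is mildly circular, since in Siegel's paper that formula is a \emph{consequence} of the mean value theorem rather than an input; Siegel obtained $c_n = 1$ directly by evaluating both sides on a Gaussian via Poisson summation. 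Your unfolding route is nonetheless valid provided you cite an independent source for the covolume (Minkowski reduction, as you also mention, suffices). Second, the local finiteness you need in your first paragraph---that $\int_{X_n} \widehat{\mathbbm{1}_K}\,d\eta < +\infty$ for compact $K$, so as to invoke Riesz representation and uniqueness of the invariant Radon measure---should be justified by a direct Siegel-domain or reduction-theory bound on a single compact set, not by appealing to the $L^p$ estimates of \cite{EMM} that the paper cites elsewhere, lest the argument loop back on itself.
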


\begin{rmk} \rm 
C. L. Siegel originally proved this Theorem under the assumptions that the codomain of $f$ is $\R$ and that $f$ is compactly supported, bounded, and Riemann-integrable. The version that we have stated above is a straightforward consequence. 
\end{rmk}

In order to state clearly Rogers's Theorem for higher moments of the Siegel transform, it is first necessary to develop some notation; our notation is the same as that in \cite{MeanRog, RogSet, MeanSchmidt}. 

\begin{defn}\label{rogdef}
Let $k$ be any integer with $2 \leq k \leq n-1.$ For any integer $r$ with $1 \leq r \leq k-1,$ we define $\mathcal{P}_{k, r}$ to be the set of all partitions $(\nu ; \mu) = (\nu_1, \dots , \nu_r ; \mu_1, \dots , \mu_{k-r})$ of the set $\{1, \dots , k\}$ subject to the following conditions:
\begin{itemize}
    \item $\ds 1 \leq \nu_1 < \dots < \nu_r \leq k$ \ and \ $\ds 1 \leq \mu_1 < \dots < \mu_{k-r} \leq k$; 
    \item for each $\ds i \in \{1, \dots , r\}$ and each $j \in \{1, \dots , k-r\},$ we have $\nu_i \neq \mu_j.$ 
\end{itemize}
For any integer $r$ with $1 \leq r \leq k-1$, any integer $s \in \Z_{\geq 1},$ and any $\ds (\nu ; \mu) = (\nu_1, \dots , \nu_r ; \mu_1, \dots , \mu_{k-r}) \in \mathcal{P}_{k, r},$ we define $\mathcal{D}_{k, r, s, (\nu ; \mu)}$ to be the set of $r \times k$ integer matrices $D = [d_{ij}]$ subject to the following conditions: 
\begin{itemize}
    \item each column of $D$ is nonzero;\footnote{\label{ambiguous} This condition is missing in both \cite{MeanRog} and \cite{MeanSchmidt}, but it is present in \cite{RogSet}. As we shall see, this ambiguity is immaterial to the results of this paper.}
    \item the greatest common factor of all coefficients of $D$ is relatively prime to $s$;
    \item if $i \in \{1, \dots , r\}$ and $j \in \{1, \dots , r\},$ then $\ds d_{i \nu_j} = s \delta_{i j}$; 
    \item if $i \in \{1, \dots , r \},$ $j \in \{1, \dots , k-r\},$ and $\mu_j < \nu_i$; then $\ds d_{i \mu_j} = 0.$  
\end{itemize}
For any integer $r$ with $1 \leq r \leq k-1$, any integer $s \in \Z_{\geq 1},$ any $\ds (\nu ; \mu) = (\nu_1, \dots , \nu_r ; \mu_1, \dots , \mu_{k-r}) \in \mathcal{P}_{k, r},$ and any $D = [d_{ij}] \in \mathcal{D}_{k, r, s, (\nu ; \mu)}$, we introduce the following notation: We let $\varepsilon_{D, 1}, \dots , \varepsilon_{D, r}$ denote the elementary divisors of the matrix $D$; for each $i \in \{1, \dots , r\},$ we then define $e_{D, i} := \gcd\left(\varepsilon_{D, i}, s\right).$ 
\end{defn}       

A special case of Rogers's Theorem then states the following. 
\begin{thm}\cite[Theorem 4]{MeanRog}\label{kmoment}
Let $\ds f: \R^n \to \R_{\geq 0}$ be any Borel measurable function. Let $k$ be any integer with $2 \leq k \leq n-1.$ Then 
\begin{align*}
    &\int_{X_n} \left( \widehat{f} \right)^k \ d\eta \\
    &= \left( \int_{\R^n} f \ dm \right)^k \\
    &+\sum_{r=1}^{k-1} \sum_{(\nu ; \mu) \in \mathcal{P}_{k, r}} \sum_{s \in \Z_{\geq 1}} \sum_{D \in \mathcal{D}_{k, r, s, (\nu ; \mu)}} \left( \frac{e_{D, 1} \cdots e_{D, r}}{s^r} \right)^n \underbrace{\int_{\R^n} \cdots \int_{\R^n}}_{r \ \textup{times}} \prod_{j=1}^k f\left(\sum_{i=1}^r \frac{d_{i j}}{s} x_i \right) \ dm(x_1) \dots dm(x_r). 
\end{align*}
Here, both sides of the equation may be equal to $+\infty.$ 
\end{thm}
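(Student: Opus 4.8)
The plan is to follow Rogers's orbit-decomposition method, which is the higher-moment counterpart of the standard unfolding proof of Theorem \ref{SiegelMean}. First I would identify $X_n$ with $\SL/\mathrm{SL}_n(\Z)$ and, using $\widehat f(g\Z^n) = \sum_{a \in \Z^n \ssm \{0\}} f(ga)$, expand
\[
\int_{X_n} \bigl(\widehat f\,\bigr)^{k}\, d\eta \;=\; \int_{\SL/\mathrm{SL}_n(\Z)} \ \sum_{(a_1,\dots,a_k)\,\in\,(\Z^n\ssm\{0\})^{k}} \ \prod_{j=1}^{k} f(g a_j)\; d\eta(g\Z^n),
\]
and then partition the sum over $k$-tuples according to the orbits of the diagonal left $\mathrm{SL}_n(\Z)$-action, so that each orbit may be unfolded separately against the Haar measure.

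Second, I would classify these orbits. To a tuple $(a_1,\dots,a_k)$ one associates the $n\times k$ integer matrix with columns $a_1,\dots,a_k$; via the Hermite and Smith normal forms under left multiplication by $\mathrm{SL}_n(\Z)$, its orbit is encoded by precisely the data of Definition \ref{rogdef}: the rank $r$ of the subgroup $\langle a_1,\dots,a_k\rangle$, a partition $(\nu;\mu)\in\mathcal{P}_{k,r}$ recording which columns are "pivots" and which are $\Q$-combinations of the pivots, a common denominator $s\in\Z_{\ge 1}$, and the relation matrix $D\in\mathcal{D}_{k,r,s,(\nu;\mu)}$; the elementary divisors $\varepsilon_{D,i}$ and the quantities $e_{D,i}=\gcd(\varepsilon_{D,i},s)$ then measure the index of $\langle a_1,\dots,a_k\rangle$ in its saturation in $\Z^n$.

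Third, for each fixed type $(r,(\nu;\mu),s,D)$ I would re-parametrize the tuples of that type by an auxiliary ordered $r$-frame $(x_1,\dots,x_r)$ in $\Z^n$ spanning a rank-$r$ subgroup, so that $\prod_j f(ga_j)$ becomes $\prod_{j=1}^{k} f\bigl(g\sum_{i=1}^{r}\tfrac{d_{ij}}{s}x_i\bigr)$; unfolding that orbit against the Haar measure on $\SL/\mathrm{SL}_n(\Z)$ — exactly as one unfolds the single orbit of primitive vectors in the proof of Theorem \ref{SiegelMean}, but now for $r$-frames rather than single vectors — converts the inner sum into $\int_{(\R^n)^{r}}\prod_{j=1}^{k} f\bigl(\sum_{i}\tfrac{d_{ij}}{s}x_i\bigr)\,dm(x_1)\cdots dm(x_r)$ up to an explicit constant. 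When the constants produced for all types are collected, the zeta factors arising from the covolume of a primitive rank-$r$ sublattice cancel against the sums over the scaling data $s$ and the divisibility conditions built into $\mathcal{D}_{k,r,s,(\nu;\mu)}$, leaving precisely the weights $\bigl(e_{D,1}\cdots e_{D,r}/s^{r}\bigr)^{n}$; the type $r=k$ with trivial relations and $s=1$ reassembles to the main term $\bigl(\int_{\R^n} f\,dm\bigr)^{k}$, and the types with $1\le r\le k-1$ give the stated correction series.

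The main obstacle is the combinatorial bookkeeping of the third step: one must verify that the re-parametrization by $r$-frames is a genuine bijection onto the tuples of each type, track elementary divisors against the denominator $s$ with enough care to obtain the multiplicity/covolume factor exactly as $\bigl(e_{D,1}\cdots e_{D,r}/s^{r}\bigr)^{n}$ (this is where the ambiguity flagged in the footnote to Definition \ref{rogdef}, about whether the columns of $D$ must be nonzero, has to be resolved consistently), and handle the unfolding at the boundary case $r=n-1$. Finally, since both sides of the identity may be $+\infty$, I would first establish it for $f$ continuous with compact support — where every interchange of summation and integration is legitimate by absolute convergence — and then extend to arbitrary nonnegative Borel $f$ by monotone convergence.
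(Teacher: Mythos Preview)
The paper does not prove this theorem at all: it is quoted verbatim as \cite[Theorem~4]{MeanRog} and used as a black box. Immediately after the statement the paper only remarks that Rogers's original argument was quite different from Siegel's, that Schmidt later gave an alternative proof along Siegel's lines in \cite{German}, and that \cite[Chapter~1]{Kim} contains a clear exposition. There is therefore no ``paper's own proof'' to compare your proposal against.

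That said, your outline is essentially the Rogers/Schmidt route the paper alludes to: decompose $(\Z^n\ssm\{0\})^{k}$ into $\mathrm{SL}_n(\Z)$-orbits, classify the orbits by the rank/partition/denominator/relation-matrix data of Definition~\ref{rogdef}, and unfold orbit by orbit. The genuine work you would still owe is exactly what you flag as the obstacle: proving that the weight attached to each orbit type is precisely $\bigl(e_{D,1}\cdots e_{D,r}/s^{r}\bigr)^{n}$. This is not a formality; it is the heart of Rogers's computation, and your sketch (``the zeta factors \dots\ cancel against the sums over the scaling data'') does not yet indicate how you would carry it out. If you intend to supply a self-contained proof rather than cite the result as the paper does, you should expect to reproduce a substantial portion of \cite{MeanRog} or \cite{German} at that step.
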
 Rogers's approach to proving \cite[Theorem 4]{MeanRog} was quite different from Siegel's approach in proving the Siegel Mean Value Theorem. W. M. Schmidt then gave a proof of \cite[Theorem 4]{MeanRog} that proceeded along the lines of Siegel's proof: see \cite{German}. For a lucid and succint discussion of various results of this sort, see \cite[Chapter 1]{Kim}. \begin{rmk}\label{Schfin} \rm
Under the hypotheses of Theorem \ref{kmoment} and the additional hypotheses that $f: \R^n \to \R_{\geq 0}$ is bounded and compactly supported, W. M. Schmidt proved (\cite[Theorem 2]{MeanSchmidt}) that $\ds \int_{X_n} \left( \widehat{f} \right)^k \ d\eta < +\infty.$ In fact, even more is true: under the hypotheses of Theorem \ref{kmoment} and the aforementioned additional hypotheses, it follows from \cite[Lemma 3.1]{EMM} and \cite[Lemma 3.10]{EMM} that for each $p \in [1, n),$ we have $\ds \widehat{f} \in L^p(X_n).$ We note here that \cite[Lemma 3.1]{EMM} was proved by appealing to \cite[Lemma 2]{HeightSchmidt}.          
\end{rmk}

\begin{defn}
Let $t \in \R_{> 0}.$ We denote the closed Euclidean ball in $\R^n$ that is centered at the origin and has Lebesgue measure equal to $t$ by $B_t.$ We denote the indicator function of $B_t$ by $\rho_t.$
\end{defn}

\begin{rmk} \rm
Let $t \in \R_{>0}.$ Notice that for each $x \in \R^n,$ we have $\ds \rho_1\left(t^{-\frac{1}{n}} x\right) = \rho_t(x).$ 
\end{rmk} 

\begin{lem}\label{polball}
Let $k$ be any integer with $2 \leq k \leq n-1.$ The function $\beta_{n, k} :\R_{>0} \to \R_{>0}$ given by \[ \beta_{n, k}(t) := \int_{X_n} \left( \widehat{\rho_t} \right)^k \ d\eta \] is a real polynomial in $t$ that is monic and of degree $k.$ Each coefficient of this polynomial is nonnegative, and its constant term is zero. 
\end{lem}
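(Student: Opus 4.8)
The plan is to apply Theorem \ref{kmoment} to the function $f = \rho_t$ and analyze how each term in Rogers's formula depends on $t$. Recall that $m(B_t) = t$, so $\int_{\R^n} \rho_t \, dm = t$, and thus the leading term $\left(\int_{\R^n} \rho_t \, dm\right)^k$ contributes exactly $t^k$. This will be the monic leading term; everything else must be shown to contribute a polynomial in $t$ of degree strictly less than $k$ with nonnegative coefficients, and with no constant term.

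For a fixed choice of $r \in \{1, \dots, k-1\}$, $(\nu; \mu) \in \mathcal{P}_{k,r}$, $s \in \Z_{\geq 1}$, and $D \in \mathcal{D}_{k,r,s,(\nu;\mu)}$, I would first rescale each of the $r$ integration variables by $t^{1/n}$ using the identity $\rho_t(x) = \rho_1\left(t^{-1/n} x\right)$. Since each column of $D$ is nonzero, the change of variables $x_i \mapsto t^{1/n} x_i$ turns each factor $\rho_t\!\left(\sum_i \frac{d_{ij}}{s} x_i\right)$ into $\rho_1\!\left(\sum_i \frac{d_{ij}}{s} x_i\right)$ and produces a Jacobian factor of $t^{r/n}$ raised to... more precisely, $dm(x_1)\cdots dm(x_r)$ scales by $t^{rn/n} = t^{r}$. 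Hence the entire $r$-fold integral $\underbrace{\int \cdots \int}_{r}\prod_{j=1}^k \rho_t(\cdots) \, dm(x_1)\cdots dm(x_r)$ equals $t^{r} \cdot I_{D}$, where $I_D := \underbrace{\int \cdots \int}_{r}\prod_{j=1}^k \rho_1\!\left(\sum_i \frac{d_{ij}}{s} x_i\right) dm(x_1)\cdots dm(x_r)$ is a nonnegative constant depending only on $n$, $k$, $r$, $s$, $D$ (not on $t$). Since $1 \leq r \leq k-1$, each such term is a nonnegative constant times $t^{r}$ with $1 \leq r \leq k-1$, so it contributes to coefficients of $t^r$ for $r$ in that range and has no constant term. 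Summing over all the (infinitely many) choices of $s$ and $D$ and the finitely many choices of $r$ and $(\nu;\mu)$, I would group terms by the power $t^r$: the coefficient of $t^r$ becomes $\sum_{(\nu;\mu) \in \mathcal{P}_{k,r}} \sum_{s} \sum_{D} \left(\frac{e_{D,1}\cdots e_{D,r}}{s^r}\right)^n I_D$, which is a sum of nonnegative terms.

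The main obstacle is convergence: I must verify that for each fixed $r$, this coefficient $\sum_{(\nu;\mu)}\sum_s\sum_D \left(\frac{e_{D,1}\cdots e_{D,r}}{s^r}\right)^n I_D$ is finite (so that $\beta_{n,k}(t)$ is genuinely a polynomial rather than a formal power series with divergent coefficients). This is exactly the content guaranteeing $\int_{X_n}(\widehat{\rho_t})^k \, d\eta < +\infty$, which holds by Remark \ref{Schfin} since $\rho_t$ is bounded and compactly supported and $k \leq n-1$: Schmidt's \cite[Theorem 2]{MeanSchmidt} gives finiteness of the left side of Rogers's formula, hence finiteness of the sum of all the (nonnegative) terms on the right. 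Evaluating at, say, $t = 1$ shows $\sum_{r,(\nu;\mu),s,D}(\cdots) I_D < +\infty$; since each individual term is nonnegative and the $t$-dependence of the $r$-block is the single monomial $t^r$, Tonelli/rearrangement of this nonnegative sum lets me regroup freely and conclude that each coefficient of $t^r$ for $1 \leq r \leq k-1$ is finite and nonnegative. Therefore $\beta_{n,k}(t) = t^k + \sum_{r=1}^{k-1} c_{n,k,r}\, t^r$ with each $c_{n,k,r} \geq 0$, which is monic of degree $k$ with nonnegative coefficients and zero constant term, as claimed. $\qed$
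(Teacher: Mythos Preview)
Your proof is correct and follows essentially the same route as the paper: apply Rogers's formula (Theorem~\ref{kmoment}) to $f=\rho_t$, use the scaling identity $\rho_t(x)=\rho_1\bigl(t^{-1/n}x\bigr)$ together with the substitution $x_i\mapsto t^{1/n}x_i$ to extract a factor of $t^r$ from each $r$-fold integral, and collect terms. The only difference is expository: you spell out the finiteness argument via Remark~\ref{Schfin} and evaluation at $t=1$, whereas the paper simply opens with ``$\beta_{n,k}$ is well-defined'' and proceeds. (Your aside about nonzero columns of $D$ is harmless but unnecessary for the change of variables.)
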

\begin{proof}
We know that $\beta_{n, k}$ is well-defined. For each $r \in \{1, \dots , k-1\},$ define $\beta_{n, k, r} : \R_{>0} \to \R_{\geq 0}$ by \[ \beta_{n, k, r}(t) := \sum_{(\nu ; \mu) \in \mathcal{P}_{k, r}} \sum_{s \in \Z_{\geq 1}} \sum_{D \in \mathcal{D}_{k, r, s, (\nu ; \mu)}} \left( \frac{e_{D, 1} \cdots e_{D, r}}{s^r} \right)^n \underbrace{\int_{\R^n} \cdots \int_{\R^n}}_{r \ \textup{times}} \prod_{j=1}^k \rho_t\left(\sum_{i=1}^r \frac{d_{i j}}{s} x_i \right) \ dm(x_1) \dots dm(x_r). \] For any $t \in \R_{>0},$ any $r \in \{1, \dots , k-1\},$ any $(\nu ; \mu) \in \mathcal{P}_{k, r},$ any $s \in \Z_{\geq 1},$ and any $D \in \mathcal{D}_{k, r, s, (\nu ; \mu)};$ we have 
\begin{align*}
    &\underbrace{\int_{\R^n} \cdots \int_{\R^n}}_{r \ \textup{times}} \prod_{j=1}^k \rho_t\left(\sum_{i=1}^r \frac{d_{i j}}{s} x_i \right) \ dm(x_1) \dots dm(x_r) \\
    &=\underbrace{\int_{\R^n} \cdots \int_{\R^n}}_{r \ \textup{times}} \prod_{j=1}^k \rho_1\left(t^{- \frac{1}{n}} \sum_{i=1}^r \frac{d_{i j}}{s} x_i \right) \ dm(x_1) \dots dm(x_r) \\
     &=\underbrace{\int_{\R^n} \cdots \int_{\R^n}}_{r \ \textup{times}} \prod_{j=1}^k \rho_1\left(\sum_{i=1}^r \frac{d_{i j}}{s} \left(t^{- \frac{1}{n}}x_i\right) \right) \ dm(x_1) \dots dm(x_r) \\
     &= \underbrace{\int_{\R^n} \cdots \int_{\R^n}}_{r \ \textup{times}} \prod_{j=1}^k \rho_1\left(\sum_{i=1}^r \frac{d_{i j}}{s} z_i \right) \ (t dm(z_1)) \dots (t dm(z_r)) \\
     &= \left( \underbrace{\int_{\R^n} \cdots \int_{\R^n}}_{r \ \textup{times}} \prod_{j=1}^k \rho_1\left(\sum_{i=1}^r \frac{d_{i j}}{s} z_i \right) \ dm(z_1) \dots dm(z_r)\right) t^r. 
\end{align*}
It follows that for each $t \in \R_{>0},$ we have \[ \beta_{n, k}(t) = \int_{X_n} \left( \widehat{\rho_t} \right)^k \ d\eta = t^k + \sum_{r=1}^{k-1} \left( \beta_{n, k, r}(1) \cdot t^r \right). \] It is clear that each coefficient of this polynomial is nonnegative and that its constant term is zero.      
\end{proof}

\begin{defn}\label{polydefn}
Let $k$ be any integer with $2 \leq k \leq n-1.$ We let $P_{n, k}(T)$ denote the element of $\R[T]$ such that for each $t \in \R_{>0},$ we have $\ds P_{n, k}(t) = \int_{X_n} \left( \widehat{\rho_t} \right)^k \ d\eta.$ By the preceding lemma, this definition makes sense. We note that $P_{n, k}(T)$ is a monic polynomial of degree $k$, each of its coefficients is nonnegative, and its constant term is zero. We also define $Q_{n, k}(T) \in \R[T]$ by $\ds Q_{n, k}(T) := \left(\zeta(n)\right)^{-k} T^k - T^k + P_{n, k}(T).$ 
\end{defn}

\begin{lem}\label{nonzero}
Let $k$ be any integer with $2 \leq k \leq n-1.$ Let $\alpha$ denote the coefficient of the degree $(k-1)$ term of $P_{n, k}(T).$ Then $\alpha \geq 1.$
\end{lem}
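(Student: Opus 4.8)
The plan is to isolate the contribution to the degree $(k-1)$ coefficient $\alpha$ of $P_{n,k}(T)$ coming from a single, explicitly chosen term in Rogers's formula (Theorem \ref{kmoment}), and to show that this contribution is already at least $1$; since by Lemma \ref{polball} every coefficient of every $\beta_{n,k,r}(1)$ is nonnegative, no cancellation can occur and the full coefficient $\alpha$ is therefore at least as large. Recall from the proof of Lemma \ref{polball} that
\[
P_{n,k}(T) = T^k + \sum_{r=1}^{k-1}\beta_{n,k,r}(1)\,T^{r},
\]
so the degree $(k-1)$ coefficient is $\alpha = \beta_{n,k,k-1}(1)$, i.e. it comes entirely from the stratum $r = k-1$. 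Thus I only need a single good choice of $(\nu;\mu) \in \mathcal{P}_{k,k-1}$, of $s \in \Z_{\geq 1}$, and of $D \in \mathcal{D}_{k,k-1,s,(\nu;\mu)}$ that contributes at least $1$ to $\beta_{n,k,k-1}(1)$.

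First I would take $s = 1$. Then the condition that $\gcd$ of the entries of $D$ be coprime to $s$ is vacuous, the elementary divisors satisfy $e_{D,i} = \gcd(\varepsilon_{D,i},1) = 1$ for all $i$, and the prefactor $\left(e_{D,1}\cdots e_{D,r}/s^r\right)^n$ equals $1$. So the contribution of such a term is exactly the $r$-fold integral
\[
\underbrace{\int_{\R^n}\cdots\int_{\R^n}}_{(k-1)\ \textup{times}} \prod_{j=1}^{k}\rho_1\!\left(\sum_{i=1}^{k-1} d_{ij}\,x_i\right)\,dm(x_1)\cdots dm(x_{k-1}).
\]
Next I would choose $(\nu;\mu)$ and $D$ so that this integral is as transparent as possible: take $\nu = (1,2,\dots,k-1)$ and $\mu = (k)$, so the partition condition $d_{i\nu_j} = s\delta_{ij} = \delta_{ij}$ forces the first $k-1$ columns of $D$ to be the identity matrix $I_{k-1}$, and there is no constraint of the fourth type (since $\mu_1 = k > \nu_i$ for every $i$). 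The only remaining freedom is the last column; I would set it to $(-1,0,0,\dots,0)^{\mathsf{T}}$ (which is nonzero, so the "each column nonzero" condition holds, and this is the place where the footnoted ambiguity is harmless). With this $D$ the product becomes $\rho_1(x_1)\rho_1(x_2)\cdots\rho_1(x_{k-1})\cdot\rho_1(-x_1)$, and since $\rho_1$ is the indicator of a ball centered at the origin we have $\rho_1(-x_1) = \rho_1(x_1)$, so the integrand is $\rho_1(x_1)^2\prod_{i=2}^{k-1}\rho_1(x_i) = \prod_{i=1}^{k-1}\rho_1(x_i)$. The integral then factors as $\prod_{i=1}^{k-1}\int_{\R^n}\rho_1\,dm = 1^{k-1} = 1$.

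Hence this single term contributes exactly $1$ to $\beta_{n,k,k-1}(1) = \alpha$, and since Lemma \ref{polball} guarantees that $\beta_{n,k,k-1}(1)$ is a sum (over $(\nu;\mu)$, $s$, and $D$) of nonnegative terms, we conclude $\alpha \geq 1$. I anticipate that the only genuinely delicate point is bookkeeping: one must verify that the chosen triple $(\nu;\mu,s,D)$ really satisfies all four defining conditions of $\mathcal{D}_{k,k-1,1,(\nu;\mu)}$ as written in Definition \ref{rogdef} — in particular that the chosen last column does not secretly violate the divisibility-by-$s$ condition (it cannot, as $s=1$) and that the partition $(\nu;\mu) = (1,\dots,k-1;k)$ lies in $\mathcal{P}_{k,k-1}$ (it does, since the $\nu_i$ are strictly increasing, $\mu_1 = k$, and no $\nu_i$ equals $\mu_1$). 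Everything else is a routine change-of-variables-free computation, so the proof should be short.
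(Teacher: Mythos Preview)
Your proposal is correct and follows essentially the same approach as the paper: both isolate a single term in the $r=k-1$ stratum of Rogers's formula with $s=1$ and $(\nu;\mu)=(1,\dots,k-1;k)$, and show its contribution is exactly $1$. The only difference is cosmetic: the paper takes the last column of $D$ to be $(1,0,\dots,0)^{\mathsf T}$ and uses $\rho_1(x_1)^2=\rho_1(x_1)$, whereas you take $(-1,0,\dots,0)^{\mathsf T}$ and use the evenness $\rho_1(-x_1)=\rho_1(x_1)$; either choice yields the same integrand and the same conclusion.
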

\begin{proof}
We use the notation of Definition \ref{rogdef} and of Theorem \ref{kmoment}. Let $D = \left[ d_{ij} \right]$ be the $(k-1) \times k$ integer matrix defined as follows: For each $i \in \{1, \dots , k-1\}$ and each $j \in \{1, \dots , k-1\},$ we have $d_{ij} := \delta_{ij}.$ We have $d_{1, k} := 1$. If $k = 2,$ then this completes the definition of $D.$ If $k > 2,$ then we complete the definition of $D$ as follows: For each $\ds i \in \{2, \dots , k-1\},$ we have $d_{ik} := 0.$ Let $\ds (\nu ; \mu) = \left(\nu_1, \dots , \nu_{k-1} ; \mu_1\right) := \left(1, \dots , (k-1) ; k \right) \in \mathcal{P}_{k, (k-1)}.$ Note that each column of $D$ is nonzero and that $\ds D \in \mathcal{D}_{k, (k-1), 1, (\nu ; \mu)}.$\footnote{The possible pleonasm in the preceding sentence is due to the ambiguity in the definition of $\mathcal{D}_{k, (k-1), 1, (\nu ; \mu)}$ that was mentioned in Footnote \ref{ambiguous}.} It now follows from the proof of Lemma \ref{polball} that we have 
\begin{align*}
    \alpha &\geq \left( e_{D, 1} \cdots e_{D, k-1} \right)^n \underbrace{\int_{\R^n} \cdots \int_{\R^n}}_{(k-1) \ \textup{times}} \prod_{j=1}^k \rho_1\left(\sum_{i=1}^{k-1} d_{ij} x_i \right) \ dm(x_1) \dots dm(x_{k-1}) \\
    &\geq \underbrace{\int_{\R^n} \cdots \int_{\R^n}}_{(k-1) \ \textup{times}} \prod_{j=1}^k \rho_1\left(\sum_{i=1}^{k-1} d_{ij} x_i \right) \ dm(x_1) \dots dm(x_{k-1}) \\
    &= \underbrace{\int_{\R^n} \cdots \int_{\R^n}}_{(k-1) \ \textup{times}} \rho_1\left(x_1\right) \cdot \prod_{j=1}^{k-1} \rho_1\left(x_j\right) \ dm(x_1) \dots dm(x_{k-1}) \\
    &= \underbrace{\int_{\R^n} \cdots \int_{\R^n}}_{(k-1) \ \textup{times}} \prod_{j=1}^{k-1} \rho_1\left(x_j\right) \ dm(x_1) \dots dm(x_{k-1}) \\ 
    &= \left(m\left(B_1\right)\right)^{k-1} \\
    &= 1.  \end{align*} \end{proof}

\begin{rmk}\label{Schwarz} \rm 
We shall soon use the notion of a \textsl{Schwarz symmetrization} of a given Borel measurable function $\R^n \to \R_{\geq 0}$: see \cite[Definition 3.3]{BLL} for the definition of a Schwarz symmetrization. We note here that if $A$ is any Borel subset of $\R^n$ with $\ds 0 < m(A) < +\infty,$ then $\ds \rho_{m(A)}$ is a Schwarz symmetrization of $\ds \mathbbm{1}_A.$
\end{rmk}

We now make some definitions and prove some preliminary results. 
\begin{defn}\label{indepdefn}
Let $k$ be any integer with $1 \leq k \leq n.$ Define \[ Z_k := \left\lbrace (x_1, \dots , x_k) \in (\R^n)^k : \dim_{\R}\left( \mathrm{span}_{\R}\left( \{x_1, \dots , x_k \}\right)\right) = k \right\rbrace. \] Notice that $Z_k$ is a Borel subset of $(\R^n)^k.$

Given any Borel measurable $f : \R^n \to \R_{\geq 0},$ define $\ds ^{k, \, \rm{pr}}\widetilde{f} : X_n \to [0, +\infty]$ and $\ds ^{k}\widetilde{f} : X_n \to [0, +\infty]$ by \[ ^{k, \, \rm{pr}}\widetilde{f}(\Lambda) := \sum_{v \in \left(Z_k \cap \left(\Lambda_{\rm pr}\right)^k\right)} f(v_1) \cdots f(v_k) \hspace{0.55in} \text{and} \hspace{0.55in} ^{k}\widetilde{f}(\Lambda) := \sum_{v \in \left(Z_k \cap \Lambda^k\right)} f(v_1) \cdots f(v_k). \] Notice that if $f$ is Borel measurable, then each of \, $^{k, \, \rm{pr}}\widetilde{f}$ and $^{k}\widetilde{f}$ is $\eta$-measurable. 
\end{defn}

\medskip
The following theorem is standard, but the author was unable to find a reference in the literature for the first statement of the theorem. The second statement was first stated without proof by C. L. Siegel (see $2$) on page $347$ of \cite{Siegel}) and was proved by C. A. Rogers in \cite{MeanRog}: see the discussion concerning (8) on page 251 of \cite{MeanRog}. Notice also that it is fairly easy to deduce the second statement from the first statement: given any function $f$ as in Theorem \ref{indepintegral}, any $\Lambda \in X_n,$ and any $t \in \R_{>0},$ observe that \[ \sum_{v \in \left(\Lambda \ssm \left\lbrace 0_{\R^n} \right\rbrace \right)} f(v) = \sum_{k=1}^{+\infty} \sum_{v \in \Lambda_{\rm pr}} f(kv) \ \ \ \ \ {\rm and} \ \ \ \ \ \int_{\R^n} f(t x) \ dm(x) = t^{-n} \int_{\R^n} f(x) \ dm(x). \] The ergodic-theoretic argument that we shall use to prove the following theorem is quite different from those used by Siegel and Rogers; this type of argument has become standard ever since the groundbreaking paper \cite{Veech} of W. A. Veech situated the Siegel transform in the realm of ergodic theory. 

\smallskip

\begin{thm}\label{indepintegral}
Let $k$ be any integer with $1 \leq k \leq n-1.$ Let $f : \R^n \to \R_{\geq 0}$ be Borel measurable. Then \[ \int_{X_n} \ ^{k, \, \rm{pr}}\widetilde{f} \ d\eta = \left(\zeta(n)\right)^{-k} \left( \int_{\R^n} f \ dm \right)^k \hspace{0.5in} \text{and} \ \hspace{0.5in} \int_{X_n} \ ^{k}\widetilde{f} \ d\eta = \left( \int_{\R^n} f \ dm \right)^k. \] 
\end{thm}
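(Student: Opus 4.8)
The plan is to prove the first identity by an ergodic-theoretic unfolding in the style of W. A. Veech, with the second identity then falling out. Fix $k$ with $1 \le k \le n-1$. For each integer $r$ with $1 \le r \le k$ and all Borel $f_1,\dots,f_k : \R^n \to \R_{\ge 0}$, set
\[
T_r(f_1,\dots,f_k) := \int_{X_n} \sum_{\substack{(v_1,\dots,v_k) \in (\Lambda \ssm \{0_{\R^n}\})^k \\ \dim_\R \mathrm{span}_\R(\{v_1,\dots,v_k\}) = r}} f_1(v_1)\cdots f_k(v_k)\, d\eta(\Lambda),
\]
and let $T_r^{\mathrm{pr}}$ be the variant summing over $\Lambda_{\mathrm{pr}}$ in place of $\Lambda$. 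Each of $T_r,T_r^{\mathrm{pr}}$ is multilinear in $(f_1,\dots,f_k)$ and, since $\eta$ is $\SL$-invariant, invariant under the diagonal $\SL$-action on $(\R^n)^k$; hence, declaring the measure of a box $A_1 \times \dots \times A_k$ to be $T_r(\mathbbm{1}_{A_1},\dots,\mathbbm{1}_{A_k})$, a standard extension argument produces $\sigma$-finite $\SL$-invariant Borel measures $\nu_r, \nu_r^{\mathrm{pr}}$ on $(\R^n)^k$, each concentrated on $Y_r := \{(v_1,\dots,v_k)\in(\R^n\ssm\{0_{\R^n}\})^k : \dim_\R\mathrm{span}_\R(\{v_1,\dots,v_k\}) = r\}$, with $T_r(f_1,\dots,f_k) = \int_{(\R^n)^k} \big(\prod_{i=1}^k f_i(v_i)\big)\, d\nu_r(v_1,\dots,v_k)$ and similarly for $T_r^{\mathrm{pr}}$. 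For $r=k$ one has $Y_k = Z_k$, and since $k \le n-1$ the group $\SL$ acts transitively on $Z_k$ (given two linearly independent $k$-tuples, build a linear isomorphism of $\R^n$ carrying one to the other and correct its determinant using the nontrivial $(n-k)$-dimensional complement); hence the $\SL$-invariant measure on $Z_k$ is unique up to a positive scalar, and as $m^k$ restricted to $Z_k$ (its complement in $(\R^n)^k$ being $m^k$-null) is one such, $\nu_k = C_{n,k}\,m^k|_{Z_k}$ and $\nu_k^{\mathrm{pr}} = C_{n,k}^{\mathrm{pr}}\,m^k|_{Z_k}$ for constants $C_{n,k},C_{n,k}^{\mathrm{pr}}\in(0,+\infty)$ — finite because $\,{}^{k}\widetilde{\rho_t}\le(\widehat{\rho_t})^k$ and $\int_{X_n}(\widehat{\rho_t})^k\,d\eta = P_{n,k}(t)<+\infty$ by Lemma \ref{polball}, positive because a typical unimodular lattice has $k$ short linearly independent vectors. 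In particular $\int_{X_n}{}^{k}\widetilde{f}\,d\eta = C_{n,k}\big(\int_{\R^n}f\,dm\big)^k$ and $\int_{X_n}{}^{k, \, \mathrm{pr}}\widetilde{f}\,d\eta = C_{n,k}^{\mathrm{pr}}\big(\int_{\R^n}f\,dm\big)^k$; and feeding the Möbius identity $\mathbbm{1}_{\Lambda_{\mathrm{pr}}}(v) = \sum_{d\ge1,\ d^{-1}v\in\Lambda}\mu(d)$ into the integral defining $\nu_k^{\mathrm{pr}}$, substituting $v_i = d_i u_i$, and using $m(d^{-1}A) = d^{-n}m(A)$ together with $\sum_{d\ge1}\mu(d)d^{-n}=\zeta(n)^{-1}$, gives $C_{n,k}^{\mathrm{pr}} = \zeta(n)^{-k}\,C_{n,k}$. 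So it remains only to show $C_{n,k}=1$.

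For $k=1$ this is the Siegel Mean Value Theorem (Theorem \ref{SiegelMean}). For $2\le k\le n-1$ I would determine the constant by a scaling argument played against Lemma \ref{polball}. Let $\phi_s:(v_1,\dots,v_k)\mapsto(sv_1,\dots,sv_k)$ for $s>0$; it commutes with the $\SL$-action. Since $r\le k\le n-1$, every $\SL$-orbit inside $Y_r$ is $\phi_s$-invariant (dilating the $r$-dimensional span by $s$ can be compensated on the complement), and on each such orbit the invariant measure is unique up to a scalar and, via an $\SL$-equivariant parametrization of the orbit by an open subset of $Z_r$ (extract a maximal linearly independent sub-tuple from the $k$-tuple), is a dilate of Lebesgue measure on $\R^{nr}$; disintegrating $\nu_r$ over the orbit space then yields $(\phi_s)_*\nu_r = s^{-nr}\nu_r$, so that $\nu_r\big((B_t)^k\big) = t^r\,\nu_r\big((B_1)^k\big)$ because $(B_t)^k = \phi_{t^{1/n}}\big((B_1)^k\big)$. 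Therefore $\int_{X_n}(\widehat{\rho_t})^k\,d\eta = \sum_{r=1}^k\nu_r\big((B_1)^k\big)\,t^r$; comparing with Lemma \ref{polball} and Definition \ref{polydefn}, according to which this equals the monic degree-$k$ polynomial $P_{n,k}(t)$, the coefficient of $t^k$ forces $\nu_k\big((B_1)^k\big)=1$, i.e. $C_{n,k} = C_{n,k}\,(m(B_1))^k = 1$. Hence $C_{n,k}^{\mathrm{pr}} = \zeta(n)^{-k}$ and both identities follow (the second also directly from the first along the lines sketched in the discussion preceding the theorem).

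The step I expect to be the crux is the dilation behavior of $\nu_r$ on the proper strata $r<k$: there $Y_r$ is a continuum of $\SL$-orbits rather than a single homogeneous space, so one cannot simply invoke uniqueness of an invariant measure on $Y_r$ and must verify the scaling orbit by orbit — that each orbit is genuinely $\phi_s$-stable and that its invariant measure scales as $s^{-nr}$ — and then reassemble through a measurable disintegration of $\nu_r$ over $Y_r/\SL$. A more computational alternative bypasses this: unfold $\int_{X_n}{}^{k, \, \mathrm{pr}}\widetilde{f}\,d\eta$ directly by Weil's formula, parametrizing the $\mathrm{SL}_n(\Z)$-orbits of linearly independent $k$-tuples of primitive integer vectors by Hermite normal forms; then $C_{n,k}^{\mathrm{pr}}$ becomes a fixed covolume times a Dirichlet series which a Möbius computation identifies with $\big(\prod_{j=0}^{k-1}\zeta(n-j)\big)\big/\zeta(n)^k$, but one still has to compute the covolume $\prod_{j=0}^{k-1}\zeta(n-j)^{-1}$ by hand, so the scaling route is the more economical one.
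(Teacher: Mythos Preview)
Your framework coincides with the paper's: both invoke transitivity of the diagonal $\SL$-action on $Z_k$ (valid since $k\le n-1$) and uniqueness of the invariant measure there to conclude that $\int_{X_n}{}^{k,\mathrm{pr}}\widetilde f\,d\eta = c'\bigl(\int f\,dm\bigr)^k$ for some constant $c'$. The divergence is in how the constant is pinned down. The paper fixes $c'=\zeta(n)^{-k}$ in a single line by appeal to the primitive Siegel Mean Value Theorem (Theorem~\ref{SiegelMean}). You instead first determine the \emph{non-primitive} constant $C_{n,k}$: you stratify $\int(\widehat{\rho_t})^k\,d\eta$ by rank, argue that each $\nu_r$ is homogeneous of degree~$r$ under dilation, and read off $C_{n,k}=1$ from the monicity of $P_{n,k}$ in Lemma~\ref{polball}; the primitive constant then follows via M\"obius. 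This is correct, but note that Lemma~\ref{polball} rests on Rogers's formula (Theorem~\ref{kmoment}), whose leading term $\bigl(\int f\bigr)^k$ \emph{is} precisely the statement $C_{n,k}=1$ --- indeed the paper remarks, just before the theorem, that the second identity is already in Rogers --- so your route imports Rogers's computation to close the loop, whereas the paper's stays within the Siegel--Veech circle. Your orbitwise scaling on the lower strata $r<k$ does go through (each orbit is $\phi_s$-stable because $r<n$, the stabilizer --- block upper-triangular with $I_r$ in the top-left and an $\mathrm{SL}_{n-r}$ block --- is unimodular so the orbit measure is unique up to scalar, and the identification with an open piece of $Z_r$ gives the $s^{-nr}$ scaling), but in fact the proof of Lemma~\ref{polball} already contains exactly this homogeneity as a one-line change of variables $x_i\mapsto t^{1/n}z_i$ in Rogers's integral, which would let you bypass the disintegration you flag as the crux.
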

\begin{proof}
In light of the remarks above, we give a proof only for the first statement. We regard $Z_k$ as a topological subspace of the Euclidean space $\left(\R^n\right)^k.$ Let $\iota : Z_k \hookrightarrow \left(\R^n\right)^k$ denote the inclusion map. Let $\ds m^{\otimes k}$ denote the Lebesgue measure on $\left(\R^n\right)^k$ after the latter has been restricted to the Borel $\sigma$-algebra of $\left(\R^n\right)^k.$ Note that $\ds m^{\otimes k}\left( (\R^n)^k \ssm Z_k \right) = 0.$ Let $\ds \left(m^{\otimes k}\right)_{| Z_k}$ denote the restriction of $\ds m^{\otimes k}$ to the Borel $\sigma$-algebra of $Z_k.$ Let $C_c\left(Z_k\right)$ denote the complex vector space of all continuous and compactly supported functions $Z_k \to \mathbb{C}.$             

Define $\ds \Psi : C_c\left(Z_k\right) \to \mathbb{C}$ by \[ \Psi(F) := \int_{X_n}\left( \sum_{v \in \left(Z_k \cap \left(\Lambda_{\rm pr}\right)^k\right)} F(v) \right) \ d\eta(\Lambda). \] For any $F \in C_c\left(Z_k\right)$, define $\ds M_F := \sup\left\lbrace |F(x)| : x \in Z_k \right\rbrace < +\infty$, and let $K_F$ be a compact subset of $\R^n$ with $\ds {\rm supp}(F) \subseteq \left(K_F\right)^k.$ Remark \ref{Schfin} then implies that for each $F \in C_c(Z_k),$ $\ds |\Psi(F)| \leq M_F \int_{X_n} \left( \widehat{\mathbbm{1}_{K_F}} \right)^k < +\infty.$ We thus conclude that $\Psi$ is well-defined. 

Notice that the $k$-diagonal action of $\mathrm{SL}_n(\R)$ on $Z_k$ is transitive (since $k \leq n-1$) and that the $\mathbb{C}$-linear functional $\Psi$ is invariant with respect to this action. Notice also that with respect to this same action, the Borel measure $\ds \left(m^{\otimes k}\right)_{| Z_k}$ is also obviously invariant. The foregoing observations, the version of the Riesz Representation Theorem \cite[Theorem 7.2]{Folland}, and the Radon-Nikodym Theorem now imply that there exists a Borel measure $\xi$ on $Z_k$ for which $\ds \Psi\left(-\right) = \int_{Z_k} \left(-\right) \ d\xi$ and that there exists a constant $c = c_{m, \xi} \in \R_{>0}$ for which  we have $\ds \xi = c\left(\xi + \left(m^{\otimes k}\right)_{| Z_k}\right).$ Clearly, $c \neq 1.$ We thus have $\ds \xi = \frac{c}{1-c} \left(m^{\otimes k}\right)_{| Z_k}.$ Set $\ds c' := \frac{c}{1-c}.$ Let $h : \R^n \to \R_{\geq 0}$ be a continuous and compactly supported function for which $\ds \int_{\R^n} h \ dm = 1.$ Define $H : \left(\R^n\right)^k \to \R_{\geq 0}$ by $\ds H(x_1, \dots , x_k) := \prod_{j=1}^k h\left(x_j\right).$ Since $\ds \int_{\R^n} H \ d \left(m^{\otimes k}\right) = \int_{Z_k} (H \circ \iota) \ d\left(m^{\otimes k}\right)_{| Z_k},$ Theorem \ref{SiegelMean} for the primitive Siegel transform now implies $\ds c' := \left(\zeta(n)\right)^{-k}.$ After performing some standard approximation arguments, we see that the proof is complete. 
\end{proof} 

\smallskip

The following lemma is of utmost importance. 
\begin{lem}\label{Majorize}
Let $k$ be any integer with $1 \leq k \leq n-1.$ Let $A$ be any Borel subset of $\R^n$ with $m(A) < +\infty.$ Then \[ \int_{X_n} \left(\widehat{\mathbbm{1}_A}^{{\rm pr}}\right)^k \ d\eta \leq Q_{n, k}\left(m(A)\right).  \] 
\end{lem}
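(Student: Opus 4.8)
The plan is to split the $k$-fold pointwise product $\big(\widehat{\mathbbm{1}_A}^{{\rm pr}}\big)^k$ according to whether the indexing $k$-tuple of primitive vectors is $\R$-linearly independent, evaluate the independent part exactly by Theorem \ref{indepintegral}, control the dependent part by enlarging $\Lambda_{\rm pr}$ to $\Lambda \ssm \{0_{\R^n}\}$, and finally bound $\int_{X_n}(\widehat{\mathbbm{1}_A})^k\,d\eta$ via Rogers's formula (Theorem \ref{kmoment}) together with the Brascamp\textendash Lieb\textendash Luttinger rearrangement inequality.

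First I would dispose of degenerate cases. If $m(A)=0$, then Theorem \ref{SiegelMean} gives $\int_{X_n}\widehat{\mathbbm{1}_A}^{{\rm pr}}\,d\eta=0$, so $\widehat{\mathbbm{1}_A}^{{\rm pr}}=0$ $\eta$-almost everywhere and both sides vanish, since $Q_{n,k}(T)$ has zero constant term. If $k=1$ the inequality is precisely the first identity of Theorem \ref{SiegelMean} (reading $Q_{n,1}(T):=(\zeta(n))^{-1}T$). So assume $m(A)>0$ and $2\le k\le n-1$. Next, for each $\Lambda\in X_n$ I would write
\[ \big(\widehat{\mathbbm{1}_A}^{{\rm pr}}(\Lambda)\big)^k=\sum_{(v_1,\dots,v_k)\in(\Lambda_{\rm pr})^k}\ \prod_{j=1}^k\mathbbm{1}_A(v_j)={}^{k,\,{\rm pr}}\widetilde{\mathbbm{1}_A}(\Lambda)+R_A(\Lambda), \]
where $R_A(\Lambda):=\sum_{(v_1,\dots,v_k)\in(\Lambda_{\rm pr})^k\ssm Z_k}\prod_{j=1}^k\mathbbm{1}_A(v_j)\ge 0$ collects the linearly dependent $k$-tuples. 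By Theorem \ref{indepintegral}, $\int_{X_n}{}^{k,\,{\rm pr}}\widetilde{\mathbbm{1}_A}\,d\eta=(\zeta(n))^{-k}(m(A))^k$. Since $\Lambda_{\rm pr}\subseteq\Lambda\ssm\{0_{\R^n}\}$ while every $k$-tuple in $Z_k$ automatically has no zero coordinate, the pointwise bound $R_A(\Lambda)\le\sum_{(v_1,\dots,v_k)\in(\Lambda\ssm\{0_{\R^n}\})^k\ssm Z_k}\prod_{j=1}^k\mathbbm{1}_A(v_j)=\big(\widehat{\mathbbm{1}_A}(\Lambda)\big)^k-{}^{k}\widetilde{\mathbbm{1}_A}(\Lambda)$ holds; integrating and using the second identity of Theorem \ref{indepintegral} (so $\int_{X_n}{}^{k}\widetilde{\mathbbm{1}_A}\,d\eta=(m(A))^k$) gives $\int_{X_n}R_A\,d\eta\le\int_{X_n}\big(\widehat{\mathbbm{1}_A}\big)^k\,d\eta-(m(A))^k$.

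The remaining step is to compare $\int_{X_n}(\widehat{\mathbbm{1}_A})^k\,d\eta$ with $\int_{X_n}(\widehat{\rho_{m(A)}})^k\,d\eta=P_{n,k}(m(A))$. Applying Theorem \ref{kmoment} to $f=\mathbbm{1}_A$ and to $f=\rho_{m(A)}$, in each case the leading term is $(m(A))^k$ and the remainder is a sum, with the same nonnegative coefficients $\big(e_{D,1}\cdots e_{D,r}/s^r\big)^n$, of multilinear integrals $\int_{(\R^n)^r}\prod_{j=1}^k f\big(\sum_{i=1}^r\tfrac{d_{ij}}{s}x_i\big)\,dm(x_1)\cdots dm(x_r)$. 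Each of these is exactly of the form governed by the Brascamp\textendash Lieb\textendash Luttinger rearrangement inequality \cite[Theorem 3.4]{BLL}, and since $\rho_{m(A)}$ is a Schwarz symmetrization of $\mathbbm{1}_A$ (Remark \ref{Schwarz}), replacing $\mathbbm{1}_A$ by $\rho_{m(A)}$ does not decrease any such integral. Summing term by term, which is legitimate because the $\rho_{m(A)}$-side converges to $P_{n,k}(m(A))-(m(A))^k<+\infty$ by Lemma \ref{polball}, yields $\int_{X_n}(\widehat{\mathbbm{1}_A})^k\,d\eta\le P_{n,k}(m(A))$. Combining the displays,
\[ \int_{X_n}\big(\widehat{\mathbbm{1}_A}^{{\rm pr}}\big)^k\,d\eta\le(\zeta(n))^{-k}(m(A))^k+P_{n,k}(m(A))-(m(A))^k=Q_{n,k}(m(A)). \]

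I expect the main obstacle to be verifying cleanly that every summand in Rogers's expansion meets the hypotheses of \cite[Theorem 3.4]{BLL}\textemdash that each integrand is a finite product of nonnegative functions precomposed with admissible coordinatewise linear maps $(\R^n)^r\to\R^n$, with the rearrangement taken uniformly as the Schwarz symmetrization\textemdash and ensuring that the passage to rearranged functions can be performed term by term with no convergence difficulties; the finiteness furnished by Lemma \ref{polball} is precisely what makes this last point go through. A secondary, purely bookkeeping nuisance is the treatment of the edge cases $k=1$ and $m(A)=0$, for which $P_{n,k}$ and $Q_{n,k}$ in the sense of Definition \ref{polydefn} are not literally defined.
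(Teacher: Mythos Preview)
Your proposal is correct and follows precisely the route the paper has in mind: the paper's proof is the single sentence ``This is an immediate consequence of Theorem~\ref{kmoment}, Theorem~\ref{indepintegral}, and \cite[Theorem~3.4]{BLL},'' and you have faithfully unpacked exactly that\textemdash splitting $(\widehat{\mathbbm{1}_A}^{\rm pr})^k$ into independent and dependent tuples, evaluating the independent part by Theorem~\ref{indepintegral}, enlarging $\Lambda_{\rm pr}$ to $\Lambda\ssm\{0\}$ on the dependent part, and then applying Rogers's formula together with BLL to pass from $\mathbbm{1}_A$ to $\rho_{m(A)}$. Your observation that $Q_{n,k}$ is only defined in Definition~\ref{polydefn} for $k\ge 2$ is valid (the lemma is in fact only invoked for $k\ge 2$ later in the paper), and your handling of the degenerate cases is fine.
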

\begin{proof}
This is an immediate consequence of Theorem \ref{kmoment}, Theorem \ref{indepintegral}, and \cite[Theorem 3.4]{BLL}. 
\end{proof}

\begin{rmk} \rm 
In his paper \cite{RogSing}, Rogers claims to prove an inequality that essentially constitutes \cite[Theorem 3.4]{BLL}: see \cite[Theorem 1]{RogSing}. However, Rogers's arguments in his proof of \cite[Theorem 1]{RogSing} are at times incomplete and elide certain details; thus, we have appealed to \cite[Theorem 3.4]{BLL}. 
\end{rmk}

To ease notation, let us make the following definitions. 

\begin{defn}
Let $k$ be any integer with $1 \leq k \leq n.$ Let $A$ be any Borel subset of $\R^n.$ We define \[ \Omega_{n, k}(A) := \left\lbrace \Lambda \in X_n : \ ^{k, \, \rm{pr}}\widetilde{\mathbbm{1}_A}(\Lambda) = 0 \right\rbrace. \] 

We also define $\ds \Theta_{n, k}(A) := X_n \ssm \Omega_{n, k}(A).$ Notice that \[ \Omega_{n, k}(A) = \left\lbrace \Lambda \in X_n : \dim_{\R}\left(\mathrm{span}_{\R}\left(A \cap \Lambda_{\rm pr} \right)\right) < k \right\rbrace \] and \[ \Theta_{n, k}(A) = \left\lbrace \Lambda \in X_n : \dim_{\R}\left(\mathrm{span}_{\R}\left(A \cap \Lambda_{\rm pr} \right)\right) \geq k \right\rbrace. \] 
\end{defn}

\begin{defn}
Define $\ds \Phi_{n} : \R_{>0} \to (0, 1)$ by \[ \Phi_{n}(t) := \left[\frac{\left(\zeta(n)\right)^{-(n-1)}t^{n-1}}{Q_{n, n-1}(t)} \right]^{n-2}. \] By Lemmata \ref{polball} and \ref{nonzero}, it follows $\ds \Phi_{n} : \R_{>0} \to (0, 1)$ is well-defined.
\end{defn}

We are now ready to prove our first main result. 

\begin{proof}[Proof of Theorem \ref{Main}] 
Let $\omega_n$ denote the sum of all the non-leading coefficients of $\ds \left[ \left(\zeta(n)\right)^{n-1}Q_{n, n-1}(T)\right]^{n-2}.$ By Lemma \ref{nonzero}, we have $\ds \omega_n \geq 1.$ For each $t \in \R_{\geq 1},$ we have 
\[
1 - \Phi_{n}(t) = \frac{\left[\frac{Q_{n, n-1}(t)}{\left(\zeta(n)\right)^{-(n-1)}t^{n-1}}\right]^{n-2} - 1}{\left[\frac{Q_{n, n-1}(t)}{\left(\zeta(n)\right)^{-(n-1)}t^{n-1}}\right]^{n-2}} \leq \left[\frac{Q_{n, n-1}(t)}{\left(\zeta(n)\right)^{-(n-1)}t^{n-1}}\right]^{n-2} - 1 \leq \omega_n t^{-1}. 
\] 

\medskip

Let $A$ be any Borel subset of $\R^n$ with $\ds 0 < m(A) < +\infty.$ Define $f : X_n \to [0, +\infty]$ by $\ds f := \ ^{(n-2), \, \rm{pr}}\widetilde{\mathbbm{1}_A}.$ Define $g : X_n \to \R_{\geq 0}$ by $\ds g := \mathbbm{1}_{\Theta_{n, n-2}(A)}.$ Set $\ds p := \frac{n-1}{n-2}.$ Note that $p > 1.$ Set $\ds q := \frac{p}{p-1} = n-1.$ Since $f = fg,$ it follows from H\" older's inequality and Theorem \ref{indepintegral} that we have 
\begin{align*}
   \|f\|_1 &\leq \|f\|_p \cdot \|g\|_q, \\
   \left(\zeta(n)\right)^{-(n-2)}(m(A))^{n-2} &\leq \|f\|_p \cdot \|g\|_q.
\end{align*} It follows $\ds \left(\zeta(n)\right)^{-(n-2)q}(m(A))^{(n-2) q} \leq \|f\|_p^q \cdot \eta\left(\Theta_{n, n-2}(A)\right).$ We have \[ 0 \leq f = {}^{(n-2), \, \rm{pr}}\widetilde{\mathbbm{1}_A} \leq \left(\widehat{\mathbbm{1}_A}^{{\rm pr}}\right)^{n-2},\] whence \[ 0 \leq f^p \leq \left(\widehat{\mathbbm{1}_A}^{{\rm pr}}\right)^{(n-2) p} = \left(\widehat{\mathbbm{1}_A}^{{\rm pr}}\right)^{n-1}. \] Lemma \ref{Majorize} now implies \[ \norm{f}_p^p \leq \int_{X_n} \left(\widehat{\mathbbm{1}_A}^{{\rm pr}}\right)^{n-1} \ d\eta \leq Q_{n, n-1}( m(A) ). \] Therefore, $\ds \|f\|_p^q \leq \left[ Q_{n, n-1}( m(A) ) \right]^\frac{q}{p}.$ It follows \[ \left(\zeta(n)\right)^{-(n-2)q}(m(A))^{(n-2) q} \leq \|f\|_p^q \cdot \eta\left(\Theta_{n, n-2}(A)\right) \leq \left[ Q_{n, n-1}( m(A) ) \right]^\frac{q}{p} \cdot \eta\left(\Theta_{n, n-2}(A)\right), \] whence 
\[ \Phi_n(m(A)) \leq \eta\left(\Theta_{n, n-2}(A)\right). \] The asserted inequality \eqref{onemain} now follows. (If $m(A) \leq 1,$ then inequality \eqref{onemain} is obviously true.) \end{proof}  

We now prove the corollaries of Theorem \ref{Main}. 

\begin{proof}[Proof of Corollary \ref{InfMain}] 
Letting $\omega_n$ be as in the statement of Theorem \ref{Main}, we have \[ 1 = \lim_{j \to +\infty} 1 - \omega_n\left(m\left(A_j\right)\right)^{-1} \leq \lim_{j \to +\infty} \eta\left(\Theta_{n, n-2}\left(A_j\right)\right).\] Let $A$ be any Borel subset of $\R^n$ with $m(A) = +\infty.$ For each $j \in \Z_{\geq 1},$ set $\ds F_j := A \cap B_j.$ By the foregoing, it follows $\ds 1 \leq \lim_{j \to +\infty} \eta\left(\Theta_{n, n-2}\left(F_j\right)\right) \leq \eta\left(\Theta_{n, n-2}\left(A\right)\right).$ \end{proof}

\begin{proof}[Proof of Corollary \ref{naught}]
Notice that for any Borel subset $S$ of $\R^n$ and any $\Lambda \in X_n,$ we have $\ds N_{n, n-2}(\Lambda, S) :=   {}^{(n-2), \, \rm{pr}}\widetilde{\mathbbm{1}_S}(\Lambda).$ For each $j \in \Z_{\geq 1},$ set $A_j := A \cap B_j.$ It follows from Corollary \ref{InfMain} that for each $j \in \Z_{\geq 1},$ we have $\ds \eta\left( \Omega_{n, n-2}\left( A \ssm A_j \right)\right) = 0.$ Since $\ds A = \bigcup_{j \in \Z_{\geq 1}} A_j,$ it follows  \[ \eta\left( \left\lbrace \Lambda \in X_n : N_{n, n-2}(\Lambda, A) < \aleph_0 \right\rbrace\right) \leq \eta\left( \bigcup_{j \in \Z_{\geq 1}} \Omega_{n, n-2}\left( A \ssm A_j \right) \right) = 0. \] Hence, $\ds \eta\left( \left\lbrace \Lambda \in X_n : N_{n, n-2}(\Lambda, A) = \aleph_0 \right\rbrace\right) = 1.$ \end{proof}  

In order to prove our second main result, we first establish some notation and recall a result of W. M. Schmidt. 

\begin{notn} For any integer $k$ with $1 \leq k \leq n-1,$ set $\ds \theta_{n, k} := \left(\prod_{j=0}^{k-1} \zeta(n-j)\right)^{-1}.$ This quantity clearly always belongs to $(0, 1).$ \end{notn} 

\begin{defn}\label{primdefn}
Let $k$ be any integer with $1 \leq k \leq n.$ Let $f: \R^n \to \R_{\geq 0}.$ Define $^{k}\overline{f} : X_n \to [0, +\infty]$ by \[ ^{k}\overline{f}(\Lambda) := \sum_{v \in \left(\Lambda^k\right)_{\rm pr}} f(v_1) \cdots f(v_k).\] Notice that if $f$ is Borel measurable, then $^{k}\overline{f}$ is $\eta$-measurable. 
\end{defn}

The following theorem is due to W. M. Schmidt: see \cite[Satz 14]{TheorySchmidt}. A  proof of the following theorem is also sketched in \cite[Theorem 7.3]{KM}. 

\begin{thm}\label{Satz}\cite[Satz 14]{TheorySchmidt}
Let $k$ be any integer with $1 \leq k \leq n-1.$ Let $f: \R^n \to \R_{\geq 0}$ be any Borel measurable function in $L^1(\R^n, m).$ Then \[ \int_{X_n} \ ^{k}\overline{f} \ d\eta = \theta_{n, k} \left( \int_{\R^n} f \ dm \right)^k. \] 
\end{thm}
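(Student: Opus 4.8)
The plan is to prove the identity in two stages. First I would show that there is a constant $c_{n,k} \in [0, +\infty)$, depending only on $n$ and $k$, for which $\int_{X_n} {}^{k}\overline{f}\,d\eta = c_{n,k}\left( \int_{\R^n} f\,dm \right)^k$ for every Borel measurable $f : \R^n \to \R_{\geq 0}$ in $L^1(\R^n, m)$; then I would identify $c_{n,k}$ with $\theta_{n,k}$. For the first stage one repeats, almost verbatim, the ergodic-theoretic argument used to prove Theorem \ref{indepintegral}, now applied to the set of primitive $k$-tuples instead of $Z_k \cap \left(\Lambda_{\rm pr}\right)^k$. The relevant structural observations are: (a) every primitive $k$-tuple of a lattice is $\R$-linearly independent, so $\left(\Lambda^k\right)_{\rm pr} \subseteq Z_k \cap \Lambda^k$, and conversely every point of $Z_k$ is a primitive $k$-tuple of some $\Lambda \in X_n$ (given linearly independent $v_1, \dots, v_k$ with $k \leq n-1$, adjoin to the rank-$k$ lattice $\Z v_1 + \dots + \Z v_k$ a lattice $L'$ in a linear complement of $\mathrm{span}_{\R}(v_1, \dots, v_k)$ scaled so that the resulting lattice $\Lambda$ has covolume $1$; then $\Lambda \cap \mathrm{span}_{\R}(v_1, \dots, v_k) = \Z v_1 + \dots + \Z v_k$, so $(v_1, \dots, v_k)$ is a $\Z$-basis of $\Lambda \cap \mathrm{span}_{\R}(v_1, \dots, v_k)$, i.e.\ a primitive $k$-tuple of $\Lambda$); (b) each $g \in \SL$ carries $\Z$-bases to $\Z$-bases, so $\left((g\Lambda)^k\right)_{\rm pr} = g \cdot \left(\Lambda^k\right)_{\rm pr}$, whence the $\mathbb{C}$-linear functional $F \mapsto \int_{X_n} \big( \sum_{v \in (\Lambda^k)_{\rm pr}} F(v) \big)\,d\eta(\Lambda)$ on $C_c(Z_k)$ is invariant under the (transitive, since $k \leq n-1$) $k$-diagonal action of $\SL$ on $Z_k$; (c) for any compact $K \subseteq \R^n$ and any $\Lambda \in X_n$ we have $\card\left( \left(\Lambda^k\right)_{\rm pr} \cap K^k \right) \leq \left( \widehat{\mathbbm{1}_K}(\Lambda) \right)^k$, and $\widehat{\mathbbm{1}_K} \in L^k(X_n)$ since $k < n$ (Remark \ref{Schfin}), so the functional is well defined and finite on $C_c(Z_k)$. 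Exactly as in the proof of Theorem \ref{indepintegral}, the Riesz Representation Theorem and the Radon--Nikodym Theorem then force this functional to be a nonnegative constant multiple of $\left(m^{\otimes k}\right)_{| Z_k}$, and a standard approximation argument upgrades this to the displayed identity for all admissible $f$.

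The second stage — computing $c_{n,k}$ — is the substantive part, and I would do it by induction on $k$ (for all admissible $n$ simultaneously), proving the recursion $c_{n,k} = \left(\zeta(n)\right)^{-1} c_{n-1, k-1}$ with base case $c_{n,1} = \left(\zeta(n)\right)^{-1}$. The base case is immediate from the primitive part of the Siegel Mean Value Theorem (Theorem \ref{SiegelMean}), since $\left(\Lambda^1\right)_{\rm pr} = \Lambda_{\rm pr}$; and since $\theta_{n,1} = \left(\zeta(n)\right)^{-1}$ and $\theta_{n,k} = \left(\zeta(n)\right)^{-1} \theta_{n-1, k-1}$, the recursion yields $c_{n,k} = \theta_{n,k}$. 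To establish the recursion I would peel off the first vector of a primitive $k$-tuple: for $\Lambda \in X_n$ and $v_1 \in \Lambda_{\rm pr}$, write $\pi_{v_1} : \R^n \to \R^n / \R v_1$ for the quotient map and $\bar\Lambda := \pi_{v_1}(\Lambda)$, a full-rank lattice in $\R^n/\R v_1 \cong \R^{n-1}$ of covolume $|v_1|^{-1}$. A tuple $(v_1, \dots, v_k) \in \Lambda^k$ is a primitive $k$-tuple of $\Lambda$ if and only if $v_1 \in \Lambda_{\rm pr}$ and $\left(\pi_{v_1}(v_2), \dots, \pi_{v_1}(v_k)\right)$ is a primitive $(k-1)$-tuple of $\bar\Lambda$; moreover, summing $f$ over the fibre of $\pi_{v_1}$ through a lattice point of $\Lambda$ produces the periodization $\widetilde{f}_{v_1} : \R^n/\R v_1 \to \R_{\geq 0}$ defined (via any measurable section $\sigma$) by $\widetilde{f}_{v_1}(\bar x) := \sum_{\ell \in \Z} f(\sigma(\bar x) + \ell v_1)$, which satisfies $\int_{\R^n/\R v_1} \widetilde{f}_{v_1} = |v_1|^{-1} \int_{\R^n} f\,dm$ for the quotient Lebesgue measure. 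Hence ${}^{k}\overline{f}(\Lambda) = \sum_{v_1 \in \Lambda_{\rm pr}} f(v_1) \cdot {}^{(k-1)}\overline{\widetilde{f}_{v_1}}(\bar\Lambda)$. Rescaling $\bar\Lambda$ to covolume $1$ and $\widetilde{f}_{v_1}$ accordingly, one checks that the rescaled test function has $\R^{n-1}$-integral equal to $\int_{\R^n} f\,dm$ (the two occurrences of $|v_1|$ cancel), so the inductive hypothesis gives $\theta_{n-1, k-1}\left(\int_{\R^n} f\,dm\right)^{k-1}$ for the $\eta$-average over the space $X_{n-1}$, independently of $v_1$. Finally one unfolds $\int_{X_n} \big(\sum_{v_1 \in \Lambda_{\rm pr}} f(v_1) \cdot {}^{(k-1)}\overline{\widetilde{f}_{v_1}}(\bar\Lambda)\big)\,d\eta(\Lambda)$ along the fibration of $X_n$ over the space of primitive vectors, whose fibres carry the Haar measure inducing $\eta$-type measures on the rescaled quotient lattices; the base measure on $\R^n$ is pinned down to $\left(\zeta(n)\right)^{-1} m$ by testing against functions of $v_1$ alone and invoking the primitive part of Theorem \ref{SiegelMean}. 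This produces $\int_{X_n} {}^{k}\overline{f}\,d\eta = \left(\zeta(n)\right)^{-1} \theta_{n-1, k-1} \left(\int_{\R^n} f\,dm\right)^k$, completing the induction.

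The step I expect to be the main obstacle is the disintegration of $\eta$ used in the unfolding at the end of the previous paragraph, i.e.\ making precise that, conditionally on a fixed primitive vector $v_1$, the rescaled quotient lattice $\bar\Lambda$ is Haar-distributed in $X_{n-1}$ as $\Lambda$ ranges over unimodular lattices containing $v_1$ as a primitive vector; this amounts to identifying the $\SL$-invariant measure on the homogeneous space of pairs $(\Lambda, v_1)$ with a specific product of the Lebesgue measure on $\R^n$, the Haar measure on $X_{n-1}$, and the Haar measure on the torus of splittings, and it carries some measure-normalization bookkeeping. One clean way to sidestep the iterative bookkeeping is to compute $c_{n,k}$ in a single step: $\mathrm{SL}_n(\Z)$ acts transitively on the primitive $k$-tuples of $\Z^n$ (for $k \leq n-1$), with the stabilizer of $(e_1, \dots, e_k)$ isomorphic to $\mathrm{SL}_{n-k}(\Z) \ltimes \Z^{k(n-k)}$, so $\int_{X_n} {}^{k}\overline{f}\,d\eta$ unfolds to an integral over $\SL / \left( \mathrm{SL}_{n-k}(\R) \ltimes \R^{k(n-k)} \right)$ on which the integrand depends only through the base $Z_k$; one then reads off that $c_{n,k}$ is, up to a dimension-independent normalization constant, the ratio $\mathrm{vol}\left( \mathrm{SL}_{n-k}(\R)/\mathrm{SL}_{n-k}(\Z) \right) \big/ \mathrm{vol}\left( \SL/\mathrm{SL}_n(\Z) \right)$, which by Siegel's covolume formula $\mathrm{vol}\left( \mathrm{SL}_m(\R)/\mathrm{SL}_m(\Z) \right) \propto \prod_{j=2}^{m} \zeta(j)$ equals $\prod_{j=n-k+1}^{n} \left(\zeta(j)\right)^{-1} = \theta_{n,k}$ — the normalization constant being fixed to $1$ by the already-known case $k=1$. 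Either way, the only input beyond Theorems \ref{SiegelMean} and \ref{indepintegral} of this paper is the compatibility of Haar measures under a (mirabolic) parabolic fibration; compare \cite[Satz 14]{TheorySchmidt} and \cite[Theorem 7.3]{KM}.
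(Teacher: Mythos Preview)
The paper does not give its own proof of this theorem; it is cited from \cite[Satz 14]{TheorySchmidt} with a pointer to the sketch in \cite[Theorem 7.3]{KM}. Your proposal therefore supplies what the paper omits, and both routes you outline are essentially correct and standard.

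Your first stage is a clean adaptation of the proof of Theorem~\ref{indepintegral}: transitivity of the diagonal $\SL$-action on $Z_k$ (using $k\le n-1$), the equivariance $\left((g\Lambda)^k\right)_{\rm pr}=g\cdot\left(\Lambda^k\right)_{\rm pr}$, and the $L^k$-bound from Remark~\ref{Schfin} are exactly the right inputs.

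In your inductive second stage there is one imprecision worth flagging, and it is the very point you yourself identify as the obstacle. The periodized function $\widetilde f_{v_1}$ defined through a measurable section $\sigma$ is \emph{not} section-independent, and its integral over $\R^n/\R v_1$ is not literally $|v_1|^{-1}\int_{\R^n}f\,dm$ for a fixed $\sigma$. What rescues the computation is precisely the average over the torus of splittings appearing in the disintegration of $\eta$: that average replaces the $\Z$-periodization by the fibre integral $\bar x\mapsto |v_1|^{-1}\int_{\R}f(\sigma(\bar x)+tv_1)\,dt$, and \emph{this} function has the stated integral. So your diagnosis of the delicate step is exactly right, and the repair is already implicit in your description of the fibration; just be careful not to assert the integral identity before the torus average has been taken.

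Your alternative ``single step'' via the mirabolic unfolding and Siegel's covolume formula is precisely the argument sketched in \cite[Theorem 7.3]{KM}. The one loose phrase is that the Haar-to-Lebesgue normalization on $Z_k$ is ``dimension-independent'' and pinned down by the case $k=1$; strictly speaking this constant could a priori depend on $k$, and what makes it equal to $1$ for every $k$ is the Jacobian computation for the decomposition $\SL\to Z_k\times\bigl(\mathrm{SL}_{n-k}(\R)\ltimes\R^{k(n-k)}\bigr)$ sending $g$ to its first $k$ columns together with the remaining block. Once that line is added, the identification $c_{n,k}=\theta_{n,k}$ follows as you say.
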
  

\medskip

\begin{proof}[Proof of Theorem \ref{ThmInequality}]
Clearly, $1 \leq \ell \leq n-1.$ Let $Z_{2\ell}$ be as per Definition \ref{indepdefn}. Let $E$ be any Borel subset of $\R^n$ with $m(E) < +\infty.$ It follows immediately from Theorem \ref{indepintegral} and Lemma \ref{Majorize} that we have 
\begin{align*}
\int_{X_n} \left( ^{\ell}\overline{\mathbbm{1}_E} \right)^2 \ d\eta\leq \left[ \int_{X_n}\left(\sum_{(v, w) \in \left( Z_{2\ell} \cap \left(\left(\Lambda^\ell\right)_{\rm pr}\right)^2\right)} \mathbbm{1}_{E^\ell}(v) \mathbbm{1}_{E^\ell}(w)\right) \ d\eta(\Lambda) \right] + P_{n, 2\ell}\left(m(E)\right) - \left(m(E)\right)^{2\ell}.   
\end{align*}
By Theorem \ref{Satz}, we have $\ds \int_{X_n} \ ^{\ell}\overline{\mathbbm{1}_E} = \theta_{n, \ell} \left(m(E)\right)^\ell.$ Thus, we need only show \begin{equation}\label{equality}
\int_{X_n}\left(\sum_{(v, w) \in \left( Z_{2\ell} \cap \left(\left(\Lambda^\ell\right)_{\rm pr}\right)^2\right)} \mathbbm{1}_{E^\ell}(v) \mathbbm{1}_{E^\ell}(w)\right) \ d\eta(\Lambda) = \left(\theta_{n, \ell} \left(m(E)\right)^\ell \right)^2.
\end{equation} By assumption, $2\ell \leq n-1$; the equation \eqref{equality} now follows by arguing as in the proof of Theorem \ref{indepintegral}, with Theorem \ref{Satz} now playing the role that was played by the primitive Siegel Mean Value Theorem in the previous proof. \end{proof}     

\begin{rmk} \rm It is easy to see that proofs similar to the one just given yield the following: \textsl{There exists ${\omega'''_n} \in \R_{> 0}$ such that for any Borel subset $A$ of $\R^n$ with $m(A) < +\infty$ and for any integer $\ell$ with $\ds 1 \leq \ell \leq \frac{n-1}{2},$ we have} 
\begin{equation}\label{bound1}
\norm{^{\ell, \, \rm{pr}}\widetilde{\mathbbm{1}_A} - \left(\int_{X_n} \ ^{\ell, \, \rm{pr}}\widetilde{\mathbbm{1}_A} \ d\eta\right) \mathbbm{1}_{X_n}}_2 = \norm{^{\ell, \, \rm{pr}}\widetilde{\mathbbm{1}_A} - \left( \frac{m(A)}{\zeta(n)}\right)^\ell \mathbbm{1}_{X_n}}_2 \leq {\omega'''_n} \left(m(A)\right)^{\frac{2\ell-1}{2}}  
\end{equation} \textsl{and}
\begin{equation}\label{bound2}
\norm{^{\ell}\widetilde{\mathbbm{1}_A} - \left(\int_{X_n} \ ^{\ell}\widetilde{\mathbbm{1}_A} \ d\eta\right) \mathbbm{1}_{X_n}}_2 = \norm{^{\ell}\widetilde{\mathbbm{1}_A} - \left( m(A) \right)^\ell \mathbbm{1}_{X_n}}_2 \leq {\omega'''_n} \left(m(A)\right)^{\frac{2\ell-1}{2}}. 
\end{equation} As will be seen from the proof of Corollary \ref{CorSchmidt}, \eqref{bound1} and \eqref{bound2} imply the obvious analogues of Corollary \ref{CorSchmidt}. 
\end{rmk}

\begin{proof}[Proof of Corollary \ref{CorMinkowski}] Let $\ell$ be any integer with $\ds 1 \leq \ell \leq \frac{n-1}{2}.$ Let $A$ be an arbitrary Borel subset of $\R^n$ with $\ds 0 < m(A) < +\infty.$ Let $\omega_n''$ be as in Theorem \ref{ThmInequality}. By Theorem \ref{Satz}, Markov's inequality, and Theorem \ref{ThmInequality}, we have 
\begin{align*}
\eta\left( \left\lbrace \Lambda \in X_n : A^\ell \cap \left(\Lambda^\ell\right)_{\rm pr} = \varnothing \right\rbrace\right) &\leq \eta\left( \left\lbrace \Lambda \in X_n : \left|\mathrm{Pr}_{A, \ell}(\Lambda) - \theta_{n, \ell} \left(m(A)\right)^\ell \right| \geq \theta_{n, \ell}\left(m(A)\right)^\ell \right\rbrace\right) \\
&\leq \norm{\mathrm{Pr}_{A, \ell} - \theta_{n, \ell} \left(m(A)\right)^\ell}_2^2 \left(\theta_{n, \ell}\left(m(A)\right)^\ell\right)^{-2} \\
&\leq \left(\omega_n''\right)^2 \left(m(A)\right)^{2\ell-1} \left(\theta_{n, \ell}\left(m(A)\right)^\ell\right)^{-2} \\
&= \left(\omega_n''\right)^2 \left(\theta_{n, \ell}\right)^{-2} \left(m(A)\right)^{-1} \\
&\leq \left(\omega_n''\right)^2 \left(\zeta(2)\right)^{2n} \left(m(A)\right)^{-1}. 
\end{align*} Now set $\ds \ds \omega_{n, *} := \left(\omega_n''\right)^2 \left(\zeta(2)\right)^{2n}.$ \end{proof}

\begin{proof}[Proof of Corollary \ref{CorSchmidt}] Let $\omega_n ''$ be as in Theorem \ref{ThmInequality}. Let $\ell$ be any integer with $\ds 1 \leq \ell \leq \frac{n-1}{2}.$ For each $t \in \R_{\geq 1},$ set $\ds h_t := \mathrm{Pr}_{A_t, \ell}.$ For each $t \in \R_{\geq 1},$ Theorem \ref{Satz} implies $\ds \int_{X_n} h_t \ d\eta = \theta_{n, \ell} \left(m(A_t)\right)^\ell.$ Fix $M \in \Z_{\geq 1}$ with $\ds M > m\left(A_1\right).$ For each $k \in \Z_{\geq M},$ fix $t_k \in \R_{\geq 1}$ for which $m\left(A_{t_k}\right) = k^2.$ Let $\varepsilon \in \R_{>0}$ be given. For each $k \in \Z_{\geq M},$ Markov's inequality and Theorem \ref{ThmInequality} imply 

\begin{align*}
\mu_X\left(\left\lbrace \Lambda \in X_n : \left| \frac{h_{t_k}(\Lambda)}{\theta_{n, \ell} \left(m\left(A_{t_k}\right)\right)^\ell} - 1 \right| \geq \varepsilon \right\rbrace\right) &\leq \varepsilon^{-2} \left\|\frac{h_{t_k}}{\theta_{n, \ell} \left(m\left(A_{t_k}\right)\right)^\ell} - \mathbbm{1}_{X}\right\|_2^2 \leq \varepsilon^{-2} \left(\omega_n''\right)^2 \left(\theta_{n, \ell}\right)^{-2} \left(m\left(A_{t_k}\right)\right)^{-1}.
\end{align*} Since $\varepsilon \in \R_{>0}$ is arbitrary and $\ds \sum_{k=M}^{+\infty} \left(m\left(A_{t_k}\right)\right)^{-1} = \sum_{k=M}^{+\infty} k^{-2} < +\infty,$ the Borel-Cantelli Lemma now implies that for $\eta$-almost every $\Lambda \in X_n,$ we have \[ \lim_{k \to +\infty} \frac{h_{t_k}(\Lambda)}{\left(m\left(A_{t_k}\right)\right)^\ell} = \theta_{n, \ell}. \] For any $k \in \Z_{\geq M}$ and any $\ds t \in \left[t_k, t_{k+1}\right),$ we have

\begin{align*}
\frac{k^{2\ell}}{(k+1)^{2\ell}} \frac{h_{t_k}}{\left(m\left(A_{t_k}\right)\right)^\ell} = \frac{h_{t_k}}{\left(m\left(A_{t_{k+1}}\right)\right)^\ell} \leq \frac{h_{t}}{\left(m\left(A_{t}\right)\right)^\ell} \leq \frac{h_{t_{k+1}}}{\left(m\left(A_{t_k}\right)\right)^\ell} = \frac{(k+1)^{2\ell}}{k^{2\ell}}\frac{h_{t_{k+1}}}{\left(m\left(A_{t_{k+1}}\right)\right)^\ell}. 
\end{align*} The result follows. \end{proof}

\begin{proof}[Proof of Corollary \ref{tuplenaught}]
Let $\ell$ be any integer with $\ds 1 \leq \ell \leq \frac{n-1}{2}.$ Let $A$ be any Borel subset of $\R^n$ with $m(A) = +\infty.$ There exists $K \in \Z_{\geq 1}$ with $m\left(A \cap B_K\right) > 0.$ It now follows from the inequality \eqref{RogType} and a basic fact of real analysis that there exists a strictly increasing sequence of integers $\left(t_k\right)_{k \in \Z_{\geq 1}}$ for which $t_1 > K$ and for which the following holds: For $\eta$-almost every $\Lambda \in X_n,$ we have \[ \lim_{k \to +\infty} \frac{\mathrm{Pr}_{\left(A \cap B_{t_k}\right), \ell}(\Lambda)}{\left(m\left(A \cap B_{t_k}\right)\right)^\ell} = \theta_{n, \ell}. \] The result follows. 
\end{proof}  

\begin{proof}[Proof of Theorem \ref{DY}] We follow the explanation that was given in \cite{Kleinbock}. Recall the assumption that $n$ is an arbitrary element of $\Z_{\geq 2}.$ Notice that the bound \eqref{primitiveDY}  is an immediate consequence of the bound \eqref{oneDY}, which we now proceed to prove. If $E$ is any Borel subset of $\R^n,$ then set $\ds P_E := \widehat{\mathbbm{1}_E}^{{\rm pr}},$ so as to ease notation. 

\medskip

It follows from \cite[(4.4)]{Log} in the case $n=2$ and from \cite[(0.2)]{KY} (for instance) in the case $n \geq 3$ that there exists a constant $s_n \in \R_{>0}$ such that for any Borel subset $E$ of $\R^n$ with $\ds 3\zeta(n) < m(E) < +\infty$ and with $\ds m(E)$ sufficiently large, we have 
\begin{align*}
\eta\left( P_E^{-1}\left( [0, 2] \right) \right) \cdot \left[ \frac{m(E)}{\zeta(n)} - 2 \right]^2 \leq \norm{P_E - \frac{m(E)}{\zeta(n)}\mathbbm{1}_{X_n}}_2^2 \leq s_n \, m(E), 
\end{align*} whence $\ds \eta\left( P_E^{-1}\left( [0, 2] \right) \right) \leq s_n \, m(E) \cdot \left[ \frac{m(E)}{\zeta(n)} - 2 \right]^{-2}.$ By choosing $\omega_n' \in \R_{>0}$ to be sufficiently large, the desired bound \eqref{oneDY} follows. \end{proof}  

\begin{rmk} \rm 
Suppose $n$ is an arbitrary element of $\Z_{\geq 2}.$ Notice that obtaining lower bounds for the measures of sets as in inequalities \eqref{onemain} and \eqref{MinCorBound} (or, equivalently, upper bounds for the measures of their complements in $X_n$) is an easy task: for any integer $k$ with $1 \leq k \leq n-1$ and any Borel subset $A$ of $\R^n$ with $m(A) < +\infty,$ it is an immediate consequence of Theorems \ref{indepintegral} and \ref{Satz} that we have
\begin{align*}
    \eta\left(\left\lbrace \Lambda \in X_n : \dim_{\R}\left(\mathrm{span}_{\R}\left(A \cap \Lambda \right)\right) \geq k \right\rbrace\right) &\leq \min\left\lbrace 1, \frac{(m(A))^k}{k!}\right\rbrace, \\
    \eta\left(\left\lbrace \Lambda \in X_n : \dim_{\R}\left(\mathrm{span}_{\R}\left(A \cap \Lambda_{\rm pr} \right)\right) \geq k \right\rbrace\right) &\leq \min\left\lbrace 1, \frac{(\zeta(n))^{-k} (m(A))^k}{k!}\right\rbrace, \  {\rm  and} \\
    \eta\left( \left\lbrace \Lambda \in X_n : A^k \cap \left(\Lambda^k \right)_{\rm pr} \neq \varnothing \right\rbrace\right) &\leq \min\left\lbrace 1, \frac{\theta_{n, k} (m(A))^k}{k!} \right\rbrace. 
\end{align*}
\end{rmk}


\begin{thebibliography}{Bib}
\bibitem[AG06]{AG}
I. Aliev and P. M. Gruber. \textsl{Lattice Points in Large Borel Sets and Successive Minima}, Discrete Comput. Geom.
{\bf 35} (2006), 429--435.

\bibitem[AM09]{Log} J. S. Athreya and G. A. Margulis. \textsl{Logarithm Laws for Unipotent Flows, I}, J.  Mod. Dyn. 
{\bf 3} (2009), 359--378.

\bibitem[BLL74]{BLL} H. J. Brascamp, E. H. Lieb, and J. M. Luttinger. \textsl{A General Rearrangement Inequality for Multiple Integrals}, J. Funct. Anal., 
{\bf 17} (1974), 227--237.

\bibitem[D05]{Durrett} R. T. Durrett. \textsl{Probability: Theory and Examples}, 3rd Ed. Brooks/Cole\textemdash Thomson Learning (2005).       

\bibitem[EMM98]{EMM} A. Eskin, G. A. Margulis, and S. Mozes. \textsl{Upper Bounds and Asymptotics in a Quantitative
Version of the Oppenheim Conjecture}, Ann. of Math. (1) {\bf 147} (1998), 93--141.

\bibitem[F99]{Folland} G. B. Folland. \textsl{Real Analysis: Modern Techniques and Their Applications}, Second Ed. John Wiley \& Sons, Inc. (1999). 

\bibitem[KY19]{KY} D. Kelmer and S. Yu. \textsl{The Second Moment of the Siegel Transform in the Space of Symplectic Lattices}, International Mathematics Research Notices, \ \href{https://doi.org/10.1093/imrn/rnz027}{{\tt https://doi.org/10.1093/imrn/rnz027}} (2019). 

\bibitem[Ki15]{Kim} S. Kim. \textsl{On the Shape of a High-Dimensional Random Lattice}, Ph.D. Dissertation, Stanford University, (May 2015). 

\bibitem[Kl19]{Kleinbock} D. Ya. Kleinbock. \textsl{Personal Communication}, (September 2019). 

\bibitem[KM99]{KM} D. Ya. Kleinbock and G. A. Margulis. \textsl{Logarithm Laws for Flows on Homogeneous Spaces}, Invent. Math. {\bf 138} (1999), no.\ 3, 451--494.

\bibitem[KS20]{KS} D. Ya. Kleinbock and M. Skenderi. \textsl{Khintchine-type Theorems for Values of Subhomogeneous Functions at Integer Points}, \  \href{https://arxiv.org/abs/1910.02067v2}{{\tt https://arxiv.org/abs/1910.02067v2}} Preprint (2020).  

\bibitem[R57]{RogSing} C. A. Rogers. \textsl{A Single Integral Inequality}, J. London Math. Soc. {\bf 32} (1957),
102--108.

\bibitem[R56]{RogSet} \bysame. \textsl{The Number of Lattices Points in a Set}, Proc. London Math. Soc. {\bf (3) 6}
(1956), 305--320. 

\bibitem[R55]{MeanRog} \bysame. \textsl{Mean Values over the Space of Lattices}, Acta Math. {\bf 94} (1955), 249--287.

\bibitem[Sc68]{HeightSchmidt} W. M. Schmidt. \textsl{Asymptotic Formulae for Point Lattices of Bounded Determinant and Subspaces of Bounded Height}, Duke Math. J. {\bf 35} (1968), 327--339.

\bibitem[Sc60]{Metrical} \bysame. \textsl{A Metrical Theorem in Geometry of Numbers}, Trans. Amer. Math. Soc. {\bf 95} (1960), no. 3, 516--529.

\bibitem[Sc58]{MeanSchmidt} \bysame. \textsl{On the Convergence of Mean Values over Lattices}, Canad. J. Math. {\bf 10} (1958), 103--110.

\bibitem[Sc59]{TheorySchmidt} \bysame. \textsl{Masstheorie in der Geometrie der Zahlen}, Acta Math. {\bf 102} (1959), 159--224. 

\bibitem[Sc57]{German} \bysame. \textsl{Mittelwerte \"uber Gitter}, Monatsh. Mathematik {\bf 61} (1957), 269--276. 

\bibitem[Si45]{Siegel} C. L. Siegel. \textsl{A Mean Value Theorem in Geometry of Numbers}, Ann. of Math. (2) {\bf 46} (1945), 340--347.      

\bibitem[Sp79]{Sprind} V. G. Sprind\v zuk, \textsl{Metric Theory of Diophantine Approximations}, John Wiley \& Sons, New York, Toronto, London (1979).

\bibitem[St11]{Str} A. Str\"ombergsson. \textsl{On the Probability of a Random Lattice Avoiding a Large Convex Set}, Proc.
London Math. Soc. \textbf{(6)} 103 (2011), 950--1006.

\bibitem[V98]{Veech} W. A. Veech. \textsl{Siegel Measures}, Ann. of Math. (3) \textbf{148} (1998), 895--944.     \end{thebibliography}
\end{document}